\def\LaTeX{\leavevmode L\raise.42ex
   \hbox{\kern-.3em\size{\sf@size}{0pt}\selectfont A}\kern-.15em\TeX}
\newcommand{\BibTeX}{{\rm B\kern-.05em{\sc
i\kern-.025emb}\kern-.08em\TeX}}
\newtheorem{col}{Corollary}[section]
\newtheorem{thm}{Theorem}[section]
\newtheorem{lem}[thm]{Lemma}
\newtheorem{dfn}[thm]{Definition}
\theoremstyle{definition}
\numberwithin{equation}{section}
\begin{document}

\title[
Bernstein-Nikolskii and Riesz 
on compact homogeneous manifolds]{
Bernstein-Nikolskii inequalities  and Riesz interpolation formula
on compact homogeneous manifolds}

\author{Isaac Pesenson}
\address{Department of Mathematics, Temple University,
Philadelphia, PA 19122} \email{pesenson@math.temple.edu}

 \keywords{Compact homogeneous manifold, Laplace operator, eigenfunctions, polynomials,
  Bernstein-Nikolskii inequalities,
 Riesz interpolation formula.}
 \subjclass[2000]{ 43A85;41A17;
Secondary 41A10}

\begin{abstract}
Bernstein-Nikolskii inequalities and Riesz interpolation formula
are established for eigenfunctions of Laplace operators and
polynomials on compact homogeneous manifolds.
\end{abstract}

\maketitle

\section{Introduction}

 Consider a trigonometric
polynomial $ \textit{T}$ of one variable $t$ as a function on a
unit circle $\mathbb{S}$. For its derivatives of
$\textit{T}^{(k)}$ the so-called Bernstein and Bernstein-Nikolskii
inequalities hold true
\begin{equation}
\|\textit{T}^{(k)}\|_{L_{p}(\mathbb{S})}\leq n^{k}
\|\textit{T}\|_{L_{p}(\mathbb{S})}
\end{equation}
and
\begin{equation}
\|\textit{T}^{(k)}\|_{L_{q}(\mathbb{S})}\leq 3n^{k+1/p-1/q}
\|\textit{T}\|_{L_{p}(\mathbb{S})},
\end{equation}
where $n$ is the order of $\textit{T}$ and $1\leq p\leq q\leq
\infty$. The constant 3 is not the best but the inequality is
exact in the sense that for the Feyer kernel
$$
 F_{n}(t)=\frac{1}{n+1}\frac{\sin^{2}
 \frac{n+1}{2}t}{2\sin^{2}\frac{t}{2}}, t\in \mathbb{S},
$$
one has
$$
\|F_{n}^{(k)}\|_{L_{q}(\mathbb{S})}=C_{p,q}n^{k+1/p-1/q}
\|F_{n}\|_{L_{p}(\mathbb{S})}.
$$
 The Bernstein inequality (1.1) can be obtained as a
consequence of the Riesz interpolation formula

\begin{equation}
\frac{d\textit{T}(t)}{dt}=\frac{1}{4\pi}\sum_{k=1}^{2n}(-1)^{k+1}
\frac{1}{\sin^{2}\frac{t_{k}}{2}}\textit{T}(t+t_{k}), t\in
\mathbb{S}, t_{k}=\frac{2k-1}{2n}\pi.
\end{equation}
If one will treat $\textit{T}$ as an entire function of
exponential type on $\mathbb{C}$ which is bounded on the real
axis, then the Riesz interpolation formula can be written in the
form
\begin{equation}
\frac{d\textit{T}(t)}{dt}=\frac{n}{\pi^{2}}\sum_{k\in\mathbb{Z}}\frac{(-1)^{k-1}}{(k-1/2)^{2}}
\textit{T}(t+\frac{\pi}{n}\left(k-1/2)\right), t\in \mathbb{R}.
\end{equation}

The Bernstein-Nikolskii inequality (1.2) is a consequence of the
inequality (1.1) and the following inequality which is known as
the Nikolskii inequality
\begin{equation}
\|\textit{T}\|_{L_{p}(\mathbb{S})}\leq \max_{u \in
\mathbb{S}}\left(h\sum_{k=1}^{N}
\left|\textit{T}(kh-u)\right|^{p}\right)^{1/p}\leq
(1+nh)\|\textit{T}\|_{L_{p}(\mathbb{S})},
\end{equation}
where $h=2\pi/N,  N\in \mathbb{N}, 1\leq p\leq \infty.$ Similar
results hold true for the $m$-dimensional torus
$\mathbb{T}^{m}=\mathbb{S}\times ...\times \mathbb{S}$. The
inequalities (1.1)- (1.5) and their proofs can be found in
\cite{A}, Ch. 4, and in  \cite{N}, Ch. 2 and 3.

Trigonometric polynomials can be characterized as eigenfunctions
of the Laplace operator on  $\mathbb{T}^{m}$. On the other hand,
if one considers the equivariant embedding of $\mathbb{T}^{m}$
into  Euclidean space $\mathbb{R}^{2m}$ (flat torus) then every
trigonometric polynomial on $\mathbb{T}^{m}$ can be identified
with a restriction to $\mathbb{T}^{m}$ of an algebraic polynomial
in the ambient space in $\mathbb{R}^{2m}$.

All  results listed above are at the very core of the classical
approximation theory. The goal of the present article is to obtain
similar results for a compact homogeneous manifold $M$.

Very deep  generalizations of some ideas which intimately relate
to  Bernstein-Markov type inequalities were obtained by J.
Bourgain \cite{B}, A. Brudnyi \cite{B1}, \cite{B2}, A. Carbery and
J. Wright \cite{CW}. In particular, the results of A. Brudnyi can
be used to obtain  a version of our Theorem 3.2. An abstract
approach to Bernstein inequality was suggested by A. Gorin
\cite{G}.

The Bernstein
 inequality
 on compact homogeneous manifolds was developed and explored in
\cite{BKLT}, \cite{BLMT},   \cite{Kam}, \cite{Pes0}- \cite{Pes3},
\cite{R}. In particular, the Bernstein inequality on spheres was
considered in \cite{D1}-\cite{D3}, \cite{MNW}. The
Bernstein-Nikolskii-type inequalities on compact symmetric spaces
of rank one were considered in  interesting papers \cite{BDai},
\cite{Dai}.
 But as well as we know  nobody considered generalizations of (1.5). In fact the
inequality we prove (see (1.13) bellow) even more general than
(1.5) and seems to be new even in the case of trigonometric
polynomials on a torus. Our approach to the Bernstein inequality
and the Riesz interpolation formula is closer to the classical one
in the sense that we are using  first-order differential operators
instead of using the Laplace-Beltrami operator as it was done in
\cite{Kam}. Note that generalizations of Bernstein-Nikolskii
inequalities to non-compact symmetric spaces will appear in our
paper \cite{Pes4}.

In what follows we introduce some very basic notions of harmonic
analysis on compact homogeneous manifolds \cite{H3}, Ch. II. More
details on this subject can be found, for example, in
 \cite{V}, \cite{Z}.

 Let $M, dim M=m,$ be a
compact connected $C^{\infty}$-manifold. It says  that a compact
Lie group $G$ effectively acts on $M$ as a group of
diffeomorphisms if

1)  every element $g\in G$ can be identified with a diffeomorphism
$$
g: M\rightarrow M
$$
of $M$ onto itself and
$$
g_{1}g_{2}\cdot x=g_{1}\cdot(g_{2}\cdot x), g_{1}, g_{2}\in G,
x\in M,
$$
where $g_{1}g_{2}$ is the product in $G$ and $g\cdot x$ is the
image of $x$ under $g$,

2) the identity $e\in G$ corresponds to the trivial diffeomorphism
\begin{equation}
e\cdot x=x,
\end{equation}

3) for every $g\in G, g\neq e,$ there exists a point $x\in M$ such
that $g\cdot x\neq x$.

\bigskip

A group $G$ acts on $M$ \textit{transitively} if in addition to
1)- 3) the following property holds

4) for any two points $x,y\in M$ there exists a diffeomorphism
$g\in G$ such that
$$
g\cdot x=y.
$$

A \textit{homogeneous} compact manifold $M$ is an
$C^{\infty}$-compact manifold on which transitively acts a compact
Lie group $G$. In this case $M$ is necessary of the form $G/K$,
where $K$ is a closed subgroup of $G$. The notation $L_{p}(M),
1\leq p\leq \infty,$ is used for the usual Banach  spaces
$L_{p}(M,dx), 1\leq p\leq \infty$, where $dx$ is an invariant
measure.

Every element $X$ of the Lie algebra of $G$ generates a vector
field on $M$ which we will denote by the same letter $X$. Namely,
for a smooth function $f$ on $M$ one has
\begin{equation}\label{vf}
 Xf(x)=\lim_{t\rightarrow 0}\frac{f(\exp tX \cdot x)-f(x)}{t}
 \end{equation}
for every $x\in M$. In the future we will consider on $M$ only
such vector fields. Translations along integral curves of such
vector field $X$ on $M$  can be identified with a one-parameter
group of diffeomorphisms of $M$ which is usually denoted as $\exp
tX, -\infty<t<\infty$. At the same time the one-parameter group
$\exp tX, -\infty<t<\infty,$ can be treated as a strongly
continuous one-parameter group of operators in a space $L_{p}(M),
1\leq p\leq \infty$ which acts on functions according to the
formula
$$
f\rightarrow f(\exp tX\cdot x), t\in \mathbb{R}, f\in L_{p}(M),
x\in M.
$$
 The
generator of this one-parameter group will be denoted as $D_{X,p}$
and the group itself will be denoted as
$$
e^{tD_{X,p}}f(x)=f(\exp tX\cdot x), t\in \mathbb{R}, f\in
L_{p}(M), x\in M.
$$

According to the general theory of one-parameter groups in Banach
spaces \cite{BB}, Ch. I,  the operator $D_{X,p}$ is a closed
operator in every $L_{p}(M), 1\leq p\leq \infty.$ In order to
simplify notations we will often use notation $D_{X}$ instead of
$D_{X, p}$.

It is known (\cite{H2},
 Ch. V, proof of the Theorem 3.1,) that
 on every compact homogeneous manifold $M=G/K$ there exist vector fields
  $X_{1},X_{2},..., X_{d}, d=dim G,$ such that the second order  differential
   operator on $M$
   $$
X_{1}^{2}+ X_{2}^{2}+ ...+ X_{d}^{2}, d=dim G,
   $$
   commutes with all $X_{1},...,X_{d}$. The
   corresponding operator in $L_{p}(M), 1\leq p\leq\infty,$
\begin{equation}
-\mathcal{L}=D_{1}^{2}+ D_{2}^{2}+ ...+ D_{d}^{2},
D_{j}=D_{X_{j}}, d=dim G,\label{Laplacian}
\end{equation}
commutes with all operators $D_{j}=D_{X_{j}}$. This operator
$\mathcal{L}$ which is usually called the Laplace operator is
involved in most of constructions and results of our paper.

In some situations the operator $\mathcal{L}$ is essentially the
Laplace-Beltrami operator of an invariant metric on $M$. It
happens for example in the following cases.

1) If $M$ is a $d$-dimensional torus and $-\mathcal{L}$ is the sum
of squares of partial derivatives.

2) If the manifold $M$ is itself a group $G$ which is compact and
semi-simple then $-\mathcal{L}$ is exactly the Laplace-Beltrami
operator of an invariant metric on $G$ (\cite{H2}, Ch. II,
Exercise A4).

3) If $M=G/K$ is a compact symmetric space of
  rank one then the operator
 $-\mathcal{L}$ is proportional to the Laplace-Beltrami operator
of an invariant metric on $G/K$. It follows from the fact that in
the rank one case every second-order operator  which commutes with
all invariant vector fields is proportional to the
Laplace-Beltrami operator (\cite{H2}, Ch. II, Theorem 4.11).

Let us stress one more time that in the present paper we  use only
the property that the operator $\mathcal{L}$ commutes with all
 vector fields $X_{1},...,X_{d}$ on $M$  and we do not explore its
relations to the Laplace-Beltrami operator of the invariant
metric.

Note that if $M=G/K$ is a compact symmetric space then the number
$d=dim G$ of operators in the formula (\ref{Laplacian}) can be
strictly bigger than the dimension $ m=dim M$. For example on a
two-dimensional sphere $\mathbb{S}^{2}$ the Laplace-Beltrami
operator $L_{\mathbb{S}^{2}}$ can be written as
\begin{equation}
\mathcal{L}_{\mathbb{S}^{2}}=D_{1}^{2}+ D_{2}^{2}+
D_{3}^{2},\label{S-Laplacian}
\end{equation}
where $D_{i}, i=1,2,3,$ generates a rotation  in $\mathbb{R}^{3}$
around coordinate axis $x_{i}$:
\begin{equation}
D_{i}=x_{j}\partial_{k}-x_{k}\partial_{j},
\end{equation}
where $j,k\neq i.$

 The  operator $\mathcal{L}$  is an elliptic differential operator which
is defined on $C^{\infty}(M)$ and we will use the same notation
$\mathcal{L}$ for its closure from $C^{\infty}(M)$ in $L_{p}(M),
1\leq p\leq \infty$. In the case $p=2$ this closure is a
self-adjoint positive definite operator in the space $L_{2}(M)$.
The spectrum of this operator is discrete and goes to infinity
$0=\lambda_{0}<\lambda_{1}\leq \lambda_{2}\leq ...$, where
 we count each eigenvalue with its  multiplicity.  For eigenvectors corresponding 
 to eigenvalue $\lambda_{j}$ we will use notation $\varphi_{j}$, i. e. 
 \begin{equation}
 \mathcal{L}\varphi_{j}=\lambda_{j}\varphi_{j}.
 \end{equation}
Let $\varphi_{0}, \varphi_{1}, \varphi_{2}, ...$ be a corresponding
complete system of orthonormal eigenfunctions and
$\textbf{E}_{\omega}(\mathcal{L}), \omega>0,$ be a span of all
eigenfunctions of $\mathcal{L}$ whose corresponding eigenvalues
are not greater $\omega$.

In the rest of the paper the notations $\mathbb{D}=\{D_{1},...,
D_{d}\}, d=dim G,$ will be used for differential operators in
$L_{p}(M), 1\leq p\leq \infty,$ which are involved in the formula
(\ref{Laplacian}).

\begin{dfn}
We say that a  function $f\in L_{p}(M), 1\leq p\leq \infty,$
belongs to the Bernstein space
$\mathbf{B}_{\omega}^{p}(\mathbb{D}),
\mathbb{D}=\{D_{1},...,D_{d}\}, d=dim G,$ if and only if for every
$1\leq i_{1},...i_{k}\leq d$ the following Bernstein inequality
holds true
 \begin{equation}
 \|D_{i_{1}}...D_{i_{k}}f\|_{p}\leq
 \omega^{k}\|f\|_{p}, k\in \mathbb{N}.\label{Bern}
\end{equation}
\end{dfn}
\begin{dfn}
We say that a  function $f\in L_{p}(M), 1\leq p\leq \infty,$
belongs to the Bernstein space
$\mathbf{B}_{\omega}^{p}(\mathcal{L}), $ if and only if for every
$k\in \mathbb{N}$ the following Bernstein inequality holds true
$$
\|\mathcal{L}^{k}f\|_{p}\leq \omega^{k}\|f\|_{p}, k\in \mathbb{N}.
$$
\end{dfn}

Since $\mathcal{L}$ in the space $L_{2}(M)$ is self-adjoint and
positive-definite there exists a unique positive square root
$\mathcal{L}^{1/2}$. In this case the last inequality is
equivalent to the inequality
$$
\|\mathcal{L}^{k/2}f\|_{2}\leq \omega^{k/2}\|f\|_{2}, k\in
\mathbb{N}.
$$

Note that at this point it is not clear if the Bernstein spaces
$\mathbf{B}_{\omega}^{p}(\mathbb{D}),
\mathbf{B}_{\omega}^{p}(\mathcal{L})$ are linear spaces. These
facts will be established later in the Lemma 2.1 and Theorem 2.2.

 The following Lemma was proved in \cite{Pes1} for
 any  homogeneous manifold.
\begin{lem} There exists a constant $N(M)$ such that
for any   sufficiently small $r>0$ there
 exists a set of points $\{x_{i}\}$ from $M$ such that

1) balls $B(x_{i}, r)$ are disjoint,

2) balls $B(x_{i}, 2r)$ form a cover of $M$,

3) multiplicity of the cover by balls $B(x_{i}, 4r)$ is not
greater $N(M).$

\end{lem}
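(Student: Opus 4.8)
The plan is to take $\{x_i\}$ to be a subset of $M$ of maximal cardinality among those whose points are pairwise separated by distance at least $2r$; such a set exists and is finite because $M$ is compact, hence totally bounded. Property 1) is immediate, since $d(x_i,x_j)\ge 2r$ forces the open balls $B(x_i,r)$ and $B(x_j,r)$ to be disjoint by the triangle inequality. Property 2) follows from maximality: if some $y\in M$ satisfied $d(y,x_i)\ge 2r$ for every $i$, then $\{x_i\}\cup\{y\}$ would be a strictly larger $2r$-separated set, a contradiction; hence every $y\in M$ lies in some $B(x_i,2r)$.

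For property 3), fix $y\in M$ and put $I_y=\{\,i : y\in B(x_i,4r)\,\}$. For each $i\in I_y$ the triangle inequality gives $B(x_i,r)\subset B(y,5r)$, and since the balls $B(x_i,r)$ are pairwise disjoint,
\[
\sum_{i\in I_y}\mathrm{vol}\,B(x_i,r)\le \mathrm{vol}\,B(y,5r).
\]
Because $M=G/K$ carries an invariant metric, the volume $v(r):=\mathrm{vol}\,B(x,r)$ is independent of the center $x$, so the displayed inequality becomes $|I_y|\,v(r)\le v(5r)$, that is, $|I_y|\le v(5r)/v(r)$. It remains only to bound this ratio uniformly for small $r$.

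To that end, in geodesic normal coordinates about any point the invariant metric agrees with the Euclidean one to first order, so $v(r)=\omega_m r^m\bigl(1+o(1)\bigr)$ as $r\to 0^+$, where $m=\dim M$ and $\omega_m$ is the volume of the Euclidean unit ball; consequently $v(5r)/v(r)\to 5^m$ as $r\to 0^+$. Hence there exist $r_0>0$ and a constant $N(M)$ with $v(5r)/v(r)\le N(M)$ for all $0<r\le r_0$, and this $N(M)$ gives the required uniform bound on the multiplicity of the cover by the balls $B(x_i,4r)$.

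The only genuine content is the volume-comparison step: one needs the local Euclidean behaviour of $v(r)$ to hold with enough uniformity to produce a ratio bound valid for all small $r$. On a general compact manifold this uniformity is supplied by a positive lower bound on the injectivity radius together with a two-sided curvature bound; in the present homogeneous setting it is essentially automatic, since $v$ is a single continuous function of $r$ alone with $v(r)>0$ for $r>0$. The rest of the argument is the standard maximal $2r$-separated set construction, and I expect no obstacle there.
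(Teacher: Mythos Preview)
Your argument is correct and is the standard one: build a maximal $2r$-separated set, get properties 1) and 2) immediately, and control the multiplicity in 3) by a packing/volume comparison. The only nontrivial point is the uniform bound on $v(5r)/v(r)$ for small $r$, and you handle it cleanly by exploiting homogeneity (so $v$ depends on $r$ alone) together with the local Euclidean asymptotics $v(r)\sim\omega_m r^m$.

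The paper, however, does not give its own proof of this lemma at all; it simply cites the author's earlier paper \cite{Pes1} (``A sampling theorem on homogeneous manifolds''), where the result is established for any homogeneous manifold. So there is no in-text argument to compare against. Your proof supplies what the paper omits, and is precisely the kind of argument one would expect in \cite{Pes1}: the maximal-separated-set construction together with the observation that on a homogeneous space the volume of a metric ball is a function of the radius only, which makes the doubling bound $v(5r)/v(r)\le N(M)$ automatic for small $r$.
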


\begin{dfn}
We will use notation $Z( r, N(M))$ for a set of points
$\{x_{i}\}\in M$ which satisfies the properties 1)- 3) from the
last Lemma 1.1 and we will call such set a  $(r,N(M))$-lattice of
$M$.
\end{dfn}

\begin{dfn}
We will use notation $Z_{G}( r, N(M))$ for a set of elements
$\{g_{\nu}\}$ of the group $G$  such that the points
$\{x_{\nu}=g_{\nu}\cdot o\}$ form a $(r,N(M))$-lattice in $M$
(here $\{o\}\in M$ is the origin of $M$). Such set $Z_{G}( r,
N(M))$ will be  called a $(r,N(M))$-lattice in $G$.
\end{dfn}

Our main results are the following. In the section 2 we establish
Riesz interpolation formula for Bernstein spaces
$\mathbf{B}_{\omega}^{p}(\mathbb{D}),
\mathbb{D}=\{D_{1},...,D_{d}\}, d=dim G,$ and use this formula to
prove some basic properties of the Bernstein spaces.

\begin{thm} The following conditions are equivalent for any $1\leq p\leq \infty$:

1) $f\in \mathbf{B}_{\omega}^{p}(\mathbb{D}),
\mathbb{D}=\{D_{1},...,D_{d}\}, d=dim G,$

2) for any  $1\leq i_{1},... ,i_{k}\leq d$,  any $1\leq j\leq d,$
and any functional $\psi^{*}\in L_{p}(M)^{*}, 1\leq p\leq \infty,$
the function
$$
\left<e^{tD_{j}}D_{i_{1}}...D_{i_{k}}f, \psi^{*}\right>:
\mathbb{R}\rightarrow \mathbb{R},
$$
  of the real variable $t$ has an extension to the
complex plane $\mathbb{C}$ as an entire function of the
exponential type at most $\omega$ and is bounded on the real line,

3) the following Riesz interpolation formula holds true
\begin{equation}
D_{i_{1}}...D_{i_{k}}f=\mathcal{R}_{i_{1}}^{\omega}...\mathcal{R}_{i_{k}}^{\omega}f,
1\leq i_{k}\leq d,\label{Rieszk}
\end{equation}
were
\begin{equation}
\mathcal{R}_{i}^{\omega}f=\frac{\omega}{\pi^{2}}\sum_{j\in\mathbb{Z}}\frac{(-1)^{j-1}}{(j-1/2)^{2}}
e^{\left(\frac{\pi}{\omega}(j-1/2)\right)D_{i}}f,\label{R1}
\end{equation}
were $e^{tD_{X}}f(x)=f(\exp tX\cdot x), t\in \mathbb{R}, f\in
L_{p}(M), x\in M,$ and convergence in (\ref{R1}) is understood in
the $L_{p}(M)$-sense.

\end{thm}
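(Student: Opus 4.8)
The plan is to reduce everything to the one‑variable classical Riesz formula \eqref{riesz-classical-entire} applied to the scalar functions $t\mapsto\langle e^{tD_j}D_{i_1}\dots D_{i_k}f,\psi^*\rangle$, and to run the equivalence as a cycle $1)\Rightarrow 2)\Rightarrow 3)\Rightarrow 1)$. First I would record the elementary facts about the one‑parameter groups $e^{tD_j}$: each is a strongly continuous group of isometries of $L_p(M)$ (translation along integral curves preserves the invariant measure), the operators $D_j$ commute with $\mathcal L$, and $\|e^{tD_j}g\|_p=\|g\|_p$ for all $t$. Fix $f$, a string $D_{i_1}\dots D_{i_k}$, an index $j$, and $\psi^*\in L_p(M)^*$, and set
\[
F(t)=\bigl\langle e^{tD_j}D_{i_1}\dots D_{i_k}f,\ \psi^*\bigr\rangle .
\]
For $1)\Rightarrow 2)$: from $f\in\mathbf B^p_\omega(\mathbb D)$ one has $\|D_j^{n}D_{i_1}\dots D_{i_k}f\|_p\le\omega^{\,n+k}\|f\|_p$, hence the Taylor coefficients of the analytic function $t\mapsto e^{tD_j}D_{i_1}\dots D_{i_k}f$ (analyticity in the $L_p$‑graph‑norm sense, since $D_j$ is the closed generator and all powers are bounded) satisfy $\|\tfrac{1}{n!}D_j^{\,n}D_{i_1}\dots D_{i_k}f\|_p\le\tfrac{\omega^{\,n+k}}{n!}\|f\|_p$. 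Pairing with $\psi^*$, the numerical series $\sum \tfrac{z^n}{n!}\langle D_j^{\,n}D_{i_1}\dots D_{i_k}f,\psi^*\rangle$ converges for all $z\in\mathbb C$ and gives the entire extension of $F$, with $|F(z)|\le\|\psi^*\|\,\omega^{k}\|f\|_p\,e^{\omega|z|}$; boundedness on $\mathbb R$ follows since $|F(t)|\le\|\psi^*\|\,\|D_{i_1}\dots D_{i_k}f\|_p$. Thus $F$ is entire of exponential type $\le\omega$ and bounded on $\mathbb R$.

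For $2)\Rightarrow 3)$: apply the classical Riesz interpolation formula \eqref{riesz-classical-entire} to $F$, which is legitimate precisely because $F$ has exponential type $\le\omega$ and is bounded on the real axis; evaluated at $t=0$ this reads
\[
F'(0)=\frac{\omega}{\pi^{2}}\sum_{j\in\mathbb Z}\frac{(-1)^{j-1}}{(j-1/2)^{2}}\,F\!\left(\frac{\pi}{\omega}\bigl(j-\tfrac12\bigr)\right).
\]
Now $F'(0)=\langle D_jD_{i_1}\dots D_{i_k}f,\psi^*\rangle$ (the generator property of $D_j$ plus continuity of $\psi^*$), while the right‑hand side is $\bigl\langle \mathcal R_j^\omega D_{i_1}\dots D_{i_k}f,\psi^*\bigr\rangle$ provided one knows the series \eqref{R1} converges in $L_p(M)$; this convergence I would get from $\sum |(-1)^{j-1}/(j-1/2)^2|<\infty$ together with $\|e^{(\pi(j-1/2)/\omega)D_i}g\|_p=\|g\|_p$, so the series is absolutely convergent in $L_p$. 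Since $\psi^*$ was arbitrary, $D_jD_{i_1}\dots D_{i_k}f=\mathcal R_j^\omega D_{i_1}\dots D_{i_k}f$. Iterating this identity downward over the string (each application lowers the number of "hard" derivatives by one, the remaining $D$'s commuting harmlessly through the bounded operator $\mathcal R^\omega$) yields the product formula \eqref{Rieszk}; one should note $\mathcal R_i^\omega$ is a bounded operator on $L_p(M)$ with $\|\mathcal R_i^\omega\|\le\frac{\omega}{\pi^2}\sum_j\frac{1}{(j-1/2)^2}=\omega$, so all the manipulations and the iterated series are justified.

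For $3)\Rightarrow 1)$: from $\|\mathcal R_i^\omega\|\le\omega$ and \eqref{Rieszk} we get $\|D_{i_1}\dots D_{i_k}f\|_p=\|\mathcal R_{i_1}^\omega\dots\mathcal R_{i_k}^\omega f\|_p\le\omega^k\|f\|_p$, which is exactly \eqref{Bern}. The main obstacle I anticipate is the vector‑valued analyticity underlying $1)\Rightarrow 2)$: one must be careful that the formal Taylor series $\sum \tfrac{z^n}{n!}D_j^{\,n}g$ really represents the group orbit $z\mapsto e^{zD_j}g$ as an $L_p(M)$‑valued entire function when $g$ satisfies the Bernstein bounds — i.e. that $g$ is an analytic vector for $D_j$ with the right growth — rather than merely that the scalarizations are entire. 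This is a standard fact about generators of one‑parameter groups once the Bernstein estimates are in hand, but it is the step that needs genuine care; the rest is bookkeeping with the uniformly bounded operators $e^{tD_j}$ and $\mathcal R_i^\omega$ and the classical scalar identity.
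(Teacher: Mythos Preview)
Your argument is correct and essentially identical to the paper's: the paper runs the same cycle $1)\Rightarrow 2)\Rightarrow 3)\Rightarrow 1)$ (organized as a single-generator Lemma~2.1 plus the invariance arguments in Theorem~2.2), using Taylor growth estimates for $1)\Rightarrow 2)$, the classical scalar Riesz formula pulled through the dual pairing for $2)\Rightarrow 3)$, and the bound $\|\mathcal R_i^{\omega}\|\le\omega$ (from the identity $\tfrac{\omega}{\pi^{2}}\sum_{j}(j-1/2)^{-2}=\omega$) for $3)\Rightarrow 1)$. One small clarification on your iteration in $2)\Rightarrow 3)$: no commutation of the $D$'s through $\mathcal R^{\omega}$ is needed or available in general --- condition 2) applied to successively shorter tails gives $D_{i_{\ell}}(D_{i_{\ell+1}}\cdots D_{i_{k}}f)=\mathcal R_{i_{\ell}}^{\omega}(D_{i_{\ell+1}}\cdots D_{i_{k}}f)$ for each $\ell$, and substituting these from the inside out yields the product formula using only boundedness of each $\mathcal R_{i_{\ell}}^{\omega}$, exactly as in the paper's Lemma~2.1.
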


 It is also shown  that if $M$ is equivariantly
embedded into Euclidean space $\mathbb{R}^{N}$ and
$\textbf{P}_{n}(M)$ is the set of restrictions to $M$ of
polynomials in $\mathbb{R}^{N}$ of order $n$ then for any $f\in
\textbf{P}_{n}(M)$ the following  Riesz interpolation formula
 holds true
$$
D_{i_{1}}...D_{i_{k}}f=\mathcal{R}_{i_{1}}^{n}...\mathcal{R}_{i_{k}}^{n}f,
1\leq i_{k}\leq d.
$$

In particular
$$
\mathcal{L}f=\sum_{l,
k=1}^{d}\mathcal{R}_{i_{l}}^{n}\left(\mathcal{R}_{i_{k}}^{n}f\right),
f\in \textbf{P}_{n}(M) .
$$
For example, in the case of the unit two-dimensional sphere
$\mathbb{S}^{2}$ with the standard embedding into $\mathbb{R}^{3}$
the last formula means that the function $\mathcal{L}f$ where $f$
is a polynomial can be calculated by using a combination of
translations of $f$ with respect to rotations around coordinate
axes. Using the Riesz interpolation formula  we also prove
Bernstein inequality for polynomials on compact homogeneous
manifolds equivariantly embedded into Euclidean space.

In the section 3 we prove a Nikolskii-type inequality. Namely, we
show that \textit{ for any $1\leq p\leq \infty,$ any natural
$l>m/p, $ there exists a constant $ C(M,l)$ such that for
 any $\omega>0$ any
$(r,N(G))$-lattice $Z_{G}( r, N(G))\subset
 G$ with sufficiently small $r>0$, and any $q\geq p$ the following
 inequalities hold true
\begin{equation}
 \|f\|_{q}\leq r^{m/q} \sup_{g\in
G}\left(\sum_{g_{i}\in Z_{G}( r, N(G))}\left(|f(g_{i}g\cdot
o)|\right)^{p}\right)^{1/p}\leq
$$
$$
C(M,  l)r^{m/q-m/p}\left(1+(r\omega)^{l}\right)\|f\|_{p}, m=dim
M,\label{Nik1}
\end{equation}
for all $f\in  \mathbf{B}_{\omega}^{p}(\mathbb{D}) $. In
particular, these inequalities hold true for polynomials in $
\textbf{P}_{n}(M)$ with $\omega=n$.}

Using these Nikolskii-type inequalities we prove that for any
$1\leq p, q\leq \infty$ the following equality holds true
$$
\mathbf{B}_{\omega}^{p}(\mathbb{D})
=\mathbf{B}_{\omega}^{q}(\mathbb{D})\equiv
\mathbf{B}_{\omega}(\mathbb{D}),\mathbb{D}=\{D_{1},...,D_{d}\},
d=dim G,
$$
which means that if the Bernstein-type inequalities (\ref{Bern})
are satisfied for a single  $1\leq p\leq \infty$, then they are
satisfied for all $1\leq p\leq \infty$.

The inequalities (\ref{Bern}) and (\ref{Nik1}) are used to obtain
the following  inequality  of the Bernstein-Nikolskii-type
$$
\|D_{i_{1}}...D_{i_{k}}f\|_{q}\leq
C(M)\omega^{k+\frac{m}{p}-\frac{m}{q}}\|f\|_{p}, f\in
\mathbf{B}_{\omega}(\mathbb{D}), m=dim M,
$$
for a certain constant $C(M)$ and any $1\leq p\leq q\leq \infty,
1\leq i_{1},...i_{k}\leq d, k\in \mathbb{N}, d=dim G.$

 We also
prove the following embeddings which describe relations between
Bernstein spaces $
\textbf{B}_{n}(\mathbb{D}),\mathbb{D}=\{D_{1},...,D_{d}\}, d=dim
G,$ and eigen spaces $\textbf{E}_{\lambda}(\mathcal{L})$ for $
-\mathcal{L}=D_{1}^{2}+ D_{2}^{2}+ ...+ D_{d}^{2}, d=dim G,$
$$
\textbf{B}_{\omega}(\mathbb{D})\subset\textbf{E}_{\omega^{2}d}(\mathcal{L})\subset
\textbf{B}_{\omega\sqrt{d}}(\mathbb{D}), d=dim G,\omega>0.
$$
These embeddings obviously  imply the equality
$$
\bigcup_{\omega>0} \textbf{B}_{\omega}(\mathbb{D})=\bigcup_{j}
\textbf{E}_{\lambda_{j}}(\mathcal{L}),
$$
which means  \textit{that a function on $M$ satisfies a Bernstein
inequality (\ref{Bern}) in a norm of $L_{p}(M), 1\leq p\leq
\infty,$ if and only if it is a linear combination of
eigenfunctions of $\mathcal{L}$.}

As a consequence we obtain the following inequalities
\begin{equation}
\|\mathcal{L}^{k}\varphi\|_{p}\leq
(d\omega)^{2k}\|\varphi\|_{p}, k\in \mathbb{N,} d=dim G,
\end{equation}
 for every $\varphi \in \textbf{E}_{\omega}(\mathcal{L})$,$1\leq p\leq \infty$.

 Note that in the
 case of homogeneous manifolds of rank one a better constant for such inequality
 was given by A. Kamzolov \cite{Kam}.

 Another consequence of our Bernstein-Nikolskii inequality  is
 the following estimate for every $\varphi \in \textbf{E}_{\omega}(\mathcal{L})$
$$
\|\mathcal{L}^{k}\varphi\|_{q}\leq C(M)
\omega^{2k+\frac{m}{p}-\frac{m}{q}}\|\varphi\|_{p}, k\in
\mathbb{N}, m=dim M, d=dim G, 1\leq p\leq q\leq\infty,
$$
for a certain constant $C(M)$ which depends just on the manifold.
At the end of the paper we establish the following relations
$$
\textbf{P}_{n}(M)\subset \textbf{B}_{n}(\mathbb{D})\subset
\textbf{E}_{n^{2}d}(\mathcal{L})\subset
\textbf{B}_{n\sqrt{d}}(\mathbb{D}), d=dim G,n\in \mathbb{N},
$$
and
$$
\bigcup _{n}\textbf{P}_{n}(M)=\bigcup_{\omega}
\textbf{B}_{\omega}(\mathbb{D})=\bigcup_{j}
\textbf{E}_{\lambda_{j}}(\mathcal{L}), n\in \mathbb{N,}
$$
where $ \textbf{P}_{n}(M)$ is the space of polynomials. Note that
the embedding
\begin{equation}
\textbf{P}_{n}(M)\subset
\mathbf{B}_{\omega}^{\infty}(\mathbb{D})\label{P-B}
\end{equation}
was proved by D. Ragozin \cite{R}.

\section{Bernstein inequality and Riesz interpolation formula on
compact homogeneous manifolds}

We assume that $A$ is a generator of one-parameter group of
isometries $e^{ tA}$ in a Banach space $E$ with the norm $\|\cdot
\|$. The Bernstein space
 $\mathbf{B}_{\omega}(A), \omega>0,$ is introduced as  a set of all vectors $f$ in $E$ for
 which
\begin{equation}
\|A^{k}f\|\leq \omega^{k}\|f\|, k\in \mathbb{N}.
\end{equation}
Let's introduce the operator
$$
\mathcal{R}_{A}^{\omega}f=\frac{\omega}{\pi^{2}}\sum_{k\in\mathbb{Z}}\frac{(-1)^{k-1}}{(k-1/2)^{2}}
e^{\left(\frac{\pi}{\omega}(k-1/2)\right)A}f, f\in E, \omega>0.
\label{Riesz1}
$$
Since $\|e^{tA}f\|=\|f\|, f\in E,$ and since the following
identity holds
\begin{equation}
\frac{\omega}{\pi^{2}}\sum_{k\in\mathbb{Z}}\frac{1}{(k-1/2)^{2}}=\omega,\label{id}
 \end{equation}
 the operator $\mathcal{R}_{A}^{\omega}, \omega>0,$ is a bounded operator in $E$ and
 \begin{equation}
 \|\mathcal{R}_{A}^{\omega}f\|\leq \omega\|f\|, f\in
 E.\label{Riesznorm}
 \end{equation}
\begin{lem} The following conditions are equivalent:

1) $f\in \mathbf{B}_{\omega}(A);$

2) for any functional $\psi^{*}$ from the dual space $ E^{*}$ and
for any $n\in \mathbb{N}$ the function
\begin{equation}
F_{n}(t)=\left<e^{tA}A^{n}f,\psi^{*}\right>:\mathbb{R}\rightarrow\label{der}
\mathbb{R},
\end{equation}
has an extension to the complex plane
$\mathbb{C}$ as an entire function of the exponential type at most
$\omega$ and is bounded on the real line;

3) the following Riesz interpolation formula holds true
\begin{equation}
A^{n}f=\left(\mathcal{R}_{A}^{\omega}\right)^{n}f, n\in
\mathbb{N}. \label{Rieszn}
\end{equation}

\end{lem}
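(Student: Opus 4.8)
The plan is to prove the chain of implications $1)\Rightarrow 2)\Rightarrow 3)\Rightarrow 1)$, with the equivalence of $2)$ and the Bernstein inequality mediated by the classical Bernstein theorem for entire functions of exponential type (Bernstein's inequality in the reverse direction, i.e., the characterization of bandlimited functions on $\mathbb R$). The whole argument is really a ``weak'' (functional-applied) reduction of an abstract Banach-space statement to the one-variable classical theory on $\mathbb R$, exploiting that $e^{tA}$ is a group of isometries so $\|g\| = \sup_{\|\psi^*\|\le 1}\langle g,\psi^*\rangle$ and all estimates transfer between $E$ and $\mathbb R$.

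\smallskip

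\emph{Step 1: $1)\Rightarrow 2)$.} Assume $f\in\mathbf B_\omega(A)$. Fix $\psi^*\in E^*$ and $n\in\mathbb N$, and set $F_n(t)=\langle e^{tA}A^n f,\psi^*\rangle$. Since $A$ generates the group $e^{tA}$ and commutes with it, the $k$-th derivative of $F_n$ is $F_n^{(k)}(t)=\langle e^{tA}A^{n+k}f,\psi^*\rangle$, so by the isometry property and the Bernstein inequality $|F_n^{(k)}(t)|\le \|A^{n+k}f\|\,\|\psi^*\|\le \omega^{n+k}\|f\|\,\|\psi^*\|$ for all $t\in\mathbb R$, $k\in\mathbb N$. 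Thus $F_n$ is a bounded $C^\infty$ function on $\mathbb R$ all of whose derivatives grow at most geometrically with ratio $\omega$; its Taylor series therefore converges and defines an entire extension satisfying $|F_n(z)|\le C\,e^{\omega|\mathrm{Im}\,z|}$ — i.e. an entire function of exponential type at most $\omega$, bounded on $\mathbb R$. (One must check that $t\mapsto e^{tA}A^n f$ is analytic into $E$; this is immediate from the same Taylor-series estimate applied in $E$ rather than through $\psi^*$, since $\|A^{n+k}f\|\le\omega^{n+k}\|f\|$ makes $\sum \frac{t^k}{k!}A^{n+k}f$ an $E$-valued entire function.)

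\smallskip

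\emph{Step 2: $2)\Rightarrow 3)$.} Apply the classical Riesz interpolation formula (1.4) to the entire function of exponential type $\omega$ given (for $n=0$) by $F_0(t)=\langle e^{tA}f,\psi^*\rangle$: its derivative at $t=0$ is $F_0'(0)=\langle Af,\psi^*\rangle$, while the right-hand side of (1.4) at $t=0$ reads $\frac{\omega}{\pi^2}\sum_{k\in\mathbb Z}\frac{(-1)^{k-1}}{(k-1/2)^2}F_0\!\big(\tfrac{\pi}{\omega}(k-1/2)\big) =\big\langle \mathcal R_A^\omega f,\psi^*\big\rangle$, the series converging absolutely by $\sum (k-1/2)^{-2}<\infty$ and boundedness of $F_0$ on $\mathbb R$. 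Since this holds for every $\psi^*\in E^*$, we get $Af=\mathcal R_A^\omega f$. Iterating: applying the same identity to $F_n$ in place of $F_0$ (each $F_n$ is again entire of exponential type $\le\omega$ and bounded on $\mathbb R$ by hypothesis $2)$, and $F_n$ is the ``symbol'' of $A^n f$, which one checks lies in the hypotheses of $2)$ shifted by one) yields $A^{n+1}f=\mathcal R_A^\omega(A^n f)$, hence by induction $A^n f=(\mathcal R_A^\omega)^n f$; the $L_p$/norm convergence of the series defining each application of $\mathcal R_A^\omega$ is already guaranteed by \eqref{Riesznorm}.

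\smallskip

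\emph{Step 3: $3)\Rightarrow 1)$.} This is the quick direction: from \eqref{Riesznorm}, $\|\mathcal R_A^\omega g\|\le\omega\|g\|$ for all $g\in E$, so iterating the formula \eqref{Rieszn} gives $\|A^n f\|=\|(\mathcal R_A^\omega)^n f\|\le\omega^n\|f\|$ for every $n$, which is exactly $f\in\mathbf B_\omega(A)$.

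\smallskip

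\emph{Main obstacle.} The routine parts are Steps 1 and 3. The delicate point is Step 2: one must legitimately invoke the scalar Riesz formula (1.4) for functions of exponential type bounded on $\mathbb R$ — which requires knowing that the abstract hypothesis $2)$ really produces such a function for every $\psi^*$ simultaneously — and then pass the pointwise scalar identity back to an identity in $E$ via the Hahn--Banach characterization of the norm. Care is also needed to justify interchanging the infinite sum with the functional $\psi^*$ (absolute convergence in $E$, from \eqref{Riesznorm} and $\sum(k-1/2)^{-2}<\infty$, handles this) and to see that the class in $2)$ is stable under replacing $f$ by $A^n f$, which is what makes the induction in the Riesz formula go through.
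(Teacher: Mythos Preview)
Your proposal is correct and follows essentially the same route as the paper: the cycle $1)\Rightarrow 2)\Rightarrow 3)\Rightarrow 1)$, with $1)\Rightarrow 2)$ via the Taylor expansion of $t\mapsto e^{tA}A^{n}f$ (convergent by the Bernstein bound $\|A^{n+k}f\|\le\omega^{n+k}\|f\|$) and the isometry of $e^{tA}$ on the real line, $2)\Rightarrow 3)$ by applying the scalar Riesz interpolation formula to each $F_{n-1}$ and passing back to $E$ through duality to obtain $A^{n}f=\mathcal{R}_A^{\omega}A^{n-1}f$ and then iterating, and $3)\Rightarrow 1)$ immediately from the operator bound $\|\mathcal{R}_A^{\omega}\|\le\omega$. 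One cosmetic remark: the bound $|F_n(z)|\le C\,e^{\omega|\mathrm{Im}\,z|}$ does not drop out of the Taylor estimate centered at $0$ (that gives only $e^{\omega|z|}$); you get it by expanding at $t_0=\mathrm{Re}\,z$, or more simply just record that $F_n$ is entire of type $\le\omega$ and bounded on $\mathbb{R}$, which is all the classical Riesz formula needs.
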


\begin{proof}

Let us assume that $f\in \mathbf{B}_{\omega}(A)$. According to a
general theory of one-parameter groups of operators in Banach
spaces \cite{BB}, Ch.1,
$$
\frac{d}{dt}e^{tA}f=Ae^{tA}f.
$$
Since
$$
\frac{d}{dt}F(t)=\frac{d}{dt}\left<e^{tA}f,\psi^{*}\right>=
\left<\frac{d}{dt}e^{tA}f,\psi^{*}\right>=\left<Ae^{tA}f,\psi^{*}\right>,
$$
it implies that if $f\in \mathbf{B}_{\omega}(A)$ then  for any
functional $\psi^{\ast}\in E^{\ast}$  the scalar function
$$
F(z)=\left< e^{zA}f, \psi^{\ast }\right>
$$
is entire because its Taylor series at $t=0$ is  the series
\begin{equation}
F(z)=\left<e^{zA}f, \psi^{\ast}\right>=\sum^{\infty}_{l=0}
\frac{z^{l}\left<A^{l}f, \psi^{\ast}\right>}{l!}
\end{equation}
which converges because of the estimate
$$
|\left<A^{l}f,\psi^{\ast}\right>|\leq
\|\psi^{\ast}\|\|A^{l}f\|\leq \omega^{l}\|\psi^{\ast}\|\|f\|.
$$
The last estimate also implies the  inequality
\begin{equation}
|F(z)|\leq e^{|z|\omega}\|\psi^{\ast}\|\|f\|,\psi^{\ast}\in
E^{\ast},
\end{equation}
which shows that $F(t)=\left<e^{tA} f,\psi^{*}\right>$ has an
extension to the complex plane $\mathbb{C}$ as an entire function
of the exponential type at most $\omega$.

Moreover, because the group $e^{tA}$ is an isometry group we have
for any real $t$ the inequality
$$
|F(t)|=|\left< e^{tA}f, \psi^{\ast }\right>|\leq\|\psi^{\ast}\|\|
e^{tA}f\|\leq\|\psi^{\ast}\|\|f\|,
$$
which shows that the function $F$ is bounded on the real line.
Thus, we proved that if $f\in \mathbf{B}_{\omega}(A)$ then the
function $F_{k}(t)=\left<e^{tA}A^{k}f,\psi^{*}\right>$ is an
entire function of the exponential type $\omega$ which is bounded
on the real line. The similar statement about a general function
of the form (\ref{der}) follows from the fact that the set
$\mathbf{B}_{\omega}(A)$ is obviously invariant under the operator
$A$. The implication $1)\rightarrow 2)$ is proved.

 If the property 2)
holds true then  for the function
$F_{n-1}(t)=\left<e^{tA}A^{n-1}f,\psi^{*}\right>, n\in
\mathbb{N},$ the following classical Riesz interpolation formula
takes place
$$
\frac{d}{dt}F_{n-1}(t)=\frac{\omega}{\pi^{2}}\sum_{k\in\mathbb{Z}}\frac{(-1)^{k-1}}{(k-1/2)^{2}}
F_{n-1}(t+\frac{\pi}{\omega}\left(k-1/2)\right).
$$
Using the same arguments as above we can write this formula in the
following form
$$
\left<Ae^{tA}A^{n-1}f,\psi^{*}\right>=\frac{\omega}{\pi^{2}}\sum_{k\in\mathbb{Z}}\frac{(-1)^{k-1}}{(k-1/2)^{2}}
\left<e^{(t+\frac{\pi}{\omega}(k-1/2))A}A^{n-1}f,\psi^{*}\right>.
$$
For $t=0$ it gives
\begin{equation}
\left<A^{n}f,\psi^{*}\right>=\frac{\omega}{\pi^{2}}\sum_{k\in\mathbb{Z}}\frac{(-1)^{k-1}}{(k-1/2)^{2}}
\left<e^{\left(\frac{\pi}{\omega}(k-1/2)\right)A}A^{n-1}f,\psi^{*}\right>.
\end{equation}
Since the last formula holds for any functional $\psi^{*}\in
E^{*}$ it proves the equality
\begin{equation}A^{n}f=\mathcal{R}_{A}^{\omega}A^{n-1}f, n\in
\mathbb{N},
\end{equation}
for every function for which the property 2) holds. But then
$$
A^{n}f=A(A(...Af))=\mathcal{R}_{A}^{\omega}(\mathcal{R}_{A}^{\omega}
(...\mathcal{R}_{A}^{\omega}f))= (\mathcal{R}_{A}^{\omega})^{n}f,
n\in \mathbb{N}.
$$

The implication $2)$ $\rightarrow $ $3)$ is proved.

To finish the proof of the Theorem we have to show that 3) implies
1). But this fact easily follows from  the formulas (\ref{Rieszn})
and (\ref{Riesznorm}).
\end{proof}

As a consequence of this Theorem we obtain the following
Corollary.
\begin{col}
If $A$ is a generator of one-parameter group of isometries $e^{
tA}$ in a Banach space $E$ then the Bernstein spaces
$\mathbf{B}_{\omega}(A)$ are linear and closed for every
$\omega>0$.
\end{col}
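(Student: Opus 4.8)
The plan is to derive both assertions of the Corollary directly from the equivalence $1)\Leftrightarrow 3)$ of Lemma~2.1, using the boundedness of the operator $\mathcal{R}_{A}^{\omega}$ recorded in (\ref{Riesznorm}). Concretely, I would characterize the Bernstein space via the Riesz formula: by Lemma~2.1, $f\in\mathbf{B}_{\omega}(A)$ if and only if $A^{n}f=(\mathcal{R}_{A}^{\omega})^{n}f$ for all $n\in\mathbb{N}$. The key point is that this condition is expressed entirely through the \emph{bounded} linear operators $A$ (restricted to the relevant domain) and $\mathcal{R}_{A}^{\omega}$, so it is stable under the vector-space and limit operations.

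For linearity, first note that the zero vector trivially lies in $\mathbf{B}_{\omega}(A)$. Given $f,h\in\mathbf{B}_{\omega}(A)$ and scalars $\alpha,\beta$, I would observe that $f$ and $h$ are automatically smooth vectors for the group (their orbit maps are entire by part $2)$ of Lemma~2.1), hence $\alpha f+\beta h$ lies in the domain of every power $A^{n}$, and $A^{n}(\alpha f+\beta h)=\alpha A^{n}f+\beta A^{n}h=\alpha(\mathcal{R}_{A}^{\omega})^{n}f+\beta(\mathcal{R}_{A}^{\omega})^{n}h=(\mathcal{R}_{A}^{\omega})^{n}(\alpha f+\beta h)$, using linearity of $\mathcal{R}_{A}^{\omega}$. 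By the implication $3)\Rightarrow 1)$ this gives $\alpha f+\beta h\in\mathbf{B}_{\omega}(A)$.

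For closedness, suppose $f_{j}\in\mathbf{B}_{\omega}(A)$ and $f_{j}\to f$ in $E$. Since $\mathcal{R}_{A}^{\omega}$ is bounded with $\|\mathcal{R}_{A}^{\omega}\|\le\omega$, the operator $(\mathcal{R}_{A}^{\omega})^{n}$ is bounded, so $(\mathcal{R}_{A}^{\omega})^{n}f_{j}\to(\mathcal{R}_{A}^{\omega})^{n}f$ in $E$. On the other hand, $A^{n}f_{j}=(\mathcal{R}_{A}^{\omega})^{n}f_{j}$ for each $j$, and this sequence converges to $(\mathcal{R}_{A}^{\omega})^{n}f$. Invoking that $A^{n}$ (as a power of a closed operator) is closed — or, more simply, just taking $g:=(\mathcal{R}_{A}^{\omega})^{n}f$ as a candidate and verifying directly from the Riesz-formula characterization — I conclude $f$ lies in the domain of $A^{n}$ with $A^{n}f=(\mathcal{R}_{A}^{\omega})^{n}f$ for all $n$, so $f\in\mathbf{B}_{\omega}(A)$ by $3)\Rightarrow 1)$.

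The only delicate point is the bookkeeping about domains: a priori one must know that finite linear combinations and limits of Bernstein vectors stay in the domains of all the unbounded powers $A^{n}$ before one can even write the identities above. This is exactly what part $2)$ of Lemma~2.1 supplies (entire orbit maps force membership in $\bigcap_{n}\mathrm{Dom}(A^{n})$), so the obstacle dissolves once that lemma is in hand; everything else is a routine transfer of linearity and boundedness of $\mathcal{R}_{A}^{\omega}$ through the characterization.
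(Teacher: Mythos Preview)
Your proof is correct and is precisely the argument the paper has in mind: the paper offers no separate proof, stating the Corollary simply ``as a consequence'' of Lemma~2.1, and you have supplied the natural details via characterization $3)$ together with the boundedness of $\mathcal{R}_A^\omega$. One minor remark: powers of closed operators need not be closed in general, so rather than relying on that parenthetical, use the iterative route you allude to --- from $f_j\to f$ and $Af_j=\mathcal{R}_A^\omega f_j\to\mathcal{R}_A^\omega f$ the closedness of $A$ itself gives $f\in\mathrm{Dom}(A)$ with $Af=\mathcal{R}_A^\omega f$, and then one repeats --- which is exactly how the paper argues the analogous closedness in Theorem~2.2.
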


 We return to a
homogeneous manifold $M$ and we are going to use notations which
were developed in the Introduction.

\begin{thm}
The set $\mathbf{B}_{\omega}^{p}(\mathbb{D}), \mathbb{D}=\{D_{1},...,D_{d}\}, d=dim G,1\leq p\leq \infty,$
has the following properties:

1) it is invariant under every $D_{\nu}, 1\leq\nu\leq d$;

2) it is a linear  subspace of $L_{p}(M)$;

3) it is a closed subspace of $L_{p}(M)$.
\end{thm}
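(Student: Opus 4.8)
The plan is to derive all three properties of $\mathbf{B}_{\omega}^{p}(\mathbb{D})$ from the abstract Lemma 2.1 and its Corollary 2.1, applied to the operators $D_{\nu}=D_{X_\nu}$, $1\le\nu\le d$. The key observation is that each $D_\nu$ generates the one-parameter group $e^{tD_\nu}f(x)=f(\exp tX_\nu\cdot x)$ which, because the measure $dx$ on $M$ is $G$-invariant, acts as a group of isometries on every $L_p(M)$, $1\le p\le\infty$. Thus the abstract framework of Section 2 applies verbatim to $E=L_p(M)$ and $A=D_\nu$, and each single-operator Bernstein space $\mathbf{B}_{\omega}^{p}(D_\nu)$ is, by Corollary 2.1, a closed linear subspace of $L_p(M)$, invariant under $D_\nu$.

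The first step is to identify $\mathbf{B}_{\omega}^{p}(\mathbb{D})$ with an intersection of spaces of the abstract type. Concretely, I would show that $f\in\mathbf{B}_{\omega}^{p}(\mathbb{D})$ if and only if $D_{i_1}\cdots D_{i_k}f\in\mathbf{B}_{\omega}^{p}(D_j)$ for every $j$ and every multi-index, i.e. the defining inequalities (\ref{Bern}) are precisely the statement that every iterated derivative of $f$ lies in a one-dimensional Bernstein space. Property 1) — invariance under each $D_\nu$ — is then immediate from the definition: if $f$ satisfies (\ref{Bern}), so does $D_\nu f$, since an iterated derivative of $D_\nu f$ is again an iterated derivative of $f$ of one higher order, and $\|D_{i_1}\cdots D_{i_k}(D_\nu f)\|_p\le\omega^{k+1}\|f\|_p=\omega^k(\omega\|f\|_p)\le\omega^k\|D_\nu f\|_p$ requires only the companion bound $\omega\|f\|_p\ge\|D_\nu f\|_p$, which is (\ref{Bern}) with $k=1$. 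A clean way to organize this is to note $\mathbf{B}_{\omega}^{p}(\mathbb{D})=\bigcap_{k\ge0}\bigcap_{i_1,\dots,i_k}(D_{i_1}\cdots D_{i_k})^{-1}\big(\bigcap_{j}\mathbf{B}_{\omega}^{p}(D_j)\big)$, intersected over all multi-indices, where each $D_{i_1}\cdots D_{i_k}$ is a closed operator on $L_p(M)$.

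For property 3), closedness: suppose $f_n\to f$ in $L_p(M)$ with $f_n\in\mathbf{B}_{\omega}^{p}(\mathbb{D})$. For a fixed multi-index, the operator $D_{i_1}\cdots D_{i_k}$ is closed, and $\|D_{i_1}\cdots D_{i_k}f_n\|_p\le\omega^k\|f_n\|_p$ is uniformly bounded; one needs to extract a limit and show it equals $D_{i_1}\cdots D_{i_k}f$. The cleanest route avoids weak-compactness subtleties by using the Riesz formula of Theorem 1.3 / Lemma 2.1: since $f_n\in\mathbf{B}_{\omega}^{p}(\mathbb{D})$, we have $D_{i_1}\cdots D_{i_k}f_n=\mathcal{R}_{i_1}^{\omega}\cdots\mathcal{R}_{i_k}^{\omega}f_n$, and each $\mathcal{R}_i^{\omega}$ is a \emph{bounded} operator on $L_p(M)$ with norm $\le\omega$ (by (\ref{Riesznorm}) applied to $A=D_i$). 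Hence $\mathcal{R}_{i_1}^{\omega}\cdots\mathcal{R}_{i_k}^{\omega}f_n\to\mathcal{R}_{i_1}^{\omega}\cdots\mathcal{R}_{i_k}^{\omega}f$ in $L_p(M)$; call the limit $h$. By closedness of $D_{i_1}\cdots D_{i_k}$, since $f_n\to f$ and $D_{i_1}\cdots D_{i_k}f_n\to h$, we get $f\in\mathrm{Dom}(D_{i_1}\cdots D_{i_k})$ and $D_{i_1}\cdots D_{i_k}f=h$, whence $\|D_{i_1}\cdots D_{i_k}f\|_p=\|h\|_p\le\omega^k\|f\|_p$. Property 2), linearity, then follows formally: if $f,g\in\mathbf{B}_{\omega}^{p}(\mathbb{D})$ and $\alpha,\beta$ are scalars, the bounded operators $\mathcal{R}_{i_1}^{\omega}\cdots\mathcal{R}_{i_k}^{\omega}$ are linear, so $D_{i_1}\cdots D_{i_k}(\alpha f+\beta g)=\alpha\,\mathcal{R}_{i_1}^{\omega}\cdots\mathcal{R}_{i_k}^{\omega}f+\beta\,\mathcal{R}_{i_1}^{\omega}\cdots\mathcal{R}_{i_k}^{\omega}g$, and taking norms gives $\|D_{i_1}\cdots D_{i_k}(\alpha f+\beta g)\|_p\le\omega^k(|\alpha|\|f\|_p+|\beta|\|g\|_p)$; one must check this is $\le\omega^k\|\alpha f+\beta g\|_p$, which is not automatic — so instead I would argue that $\alpha f+\beta g$ lies in the linear span, note each iterated derivative is bounded by $\omega^k$ times a constant, and then \emph{re-derive} the Bernstein inequality for the sum directly from the entire-function characterization 2) of Lemma 2.1: $\langle e^{tD_j}D_{i_1}\cdots D_{i_k}(\alpha f+\beta g),\psi^*\rangle$ is a linear combination of two entire functions of exponential type $\le\omega$ bounded on $\mathbb{R}$, hence has the same property, and then Lemma 2.1 (applied with $A=D_j$, after observing $D_{i_1}\cdots D_{i_k}(\alpha f+\beta g)$ lies in the relevant domain) yields the Bernstein bound.

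The main obstacle is precisely this last point: one cannot directly combine the Bernstein \emph{inequalities} for $f$ and $g$ (they are not subadditive in the right way), so linearity must be routed through the entire-function description — equivalently, through the fact that the property "every $\langle e^{tD_j}D_{i_1}\cdots D_{i_k}f,\psi^*\rangle$ extends to an entire function of exponential type $\le\omega$, bounded on $\mathbb{R}$" \emph{is} stable under linear combinations and under closure, whereas the raw inequality form is not obviously so. Once the equivalence of the inequality form with the entire-function form (Lemma 2.1, suitably packaged for the family $\mathbb{D}$) is in hand, all three assertions are formal consequences, and I would present the proof in the order: (i) each $e^{tD_\nu}$ is an isometry group on $L_p(M)$; (ii) invariance under $D_\nu$; (iii) pass to the entire-function characterization to get linearity; (iv) use boundedness of the $\mathcal{R}_i^\omega$ plus closedness of iterated $D$'s to get closedness.
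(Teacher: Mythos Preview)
Your argument for invariance (property 1) contains a reversed inequality. You write
\[
\|D_{i_1}\cdots D_{i_k}(D_\nu f)\|_p\le\omega^{k+1}\|f\|_p=\omega^k(\omega\|f\|_p)\le\omega^k\|D_\nu f\|_p,
\]
claiming the last step ``requires only the companion bound $\omega\|f\|_p\ge\|D_\nu f\|_p$''. But that bound points the wrong way: from $\|D_\nu f\|_p\le\omega\|f\|_p$ one obtains $\omega^k\|D_\nu f\|_p\le\omega^k(\omega\|f\|_p)$, not the reverse. There is in fact no way to chain the raw Bernstein inequalities for $f$ into Bernstein inequalities for $g=D_\nu f$: the quantity $\omega^{k+1}\|f\|_p$ may be arbitrarily larger than $\omega^k\|D_\nu f\|_p$ (take $f$ with $D_\nu f$ small). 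This is precisely the same non-subadditivity obstacle you correctly diagnose later for linearity, and it must be handled in the same way.

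In the paper, part 1 is the most laborious step and is proved via the entire-function route you reserve for part 2. One fixes $g=D_\nu f$ and shows, for each $j$, that $t\mapsto\langle e^{tD_j}g,\psi^*\rangle$ extends to an entire function of exponential type at most $\omega$ (this uses $\|D_j^{\,l} g\|_p=\|D_j^{\,l} D_\nu f\|_p\le\omega^{l+1}\|f\|_p$ to control the Taylor coefficients) and is bounded on the real line by $\|\psi^*\|\,\|g\|_p$ (this uses only that $e^{tD_j}$ is an isometry, and is where $\|g\|_p$ rather than $\|f\|_p$ enters). The classical one-variable Bernstein inequality then yields $\|D_j g\|_p\le\omega\|g\|_p$, and an induction on the length of the multi-index finishes. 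Your plans for parts 2 and 3 are essentially the paper's (the Riesz-operator route to closedness is a harmless variant of the paper's direct Cauchy argument via closedness of each $D_j$), but part 1 needs the same entire-function machinery and is not immediate from the definition.
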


\begin{proof}
To prove the first part of the Theorem we have to show that if
$f\in \mathbf{B}_{\omega}^{p}(\mathbb{D}),
 1\leq p\leq \infty,$ and $g=D_{\nu}f,$ for a $1\leq \nu\leq d,$
then the following inequality holds true
\begin{equation}
\|D_{i_{k}}...D_{i_{1}}g\|_{p}\leq \omega^{k}\|g\|_{p},
g=D_{\nu}f,
\end{equation}
for any $1\leq i_{1},i_{2}...,i_{k}\leq d $ . First we are going
to show that if $f\in \mathbf{B}_{\omega}^{p}(\mathbb{D}),
 1\leq p\leq \infty,$ and $g=D_{\nu}f, 1\leq \nu\leq d,$ then
 for any $D_{i_{1}}, 1\leq i_{1}\leq d,$ the following
inequality holds
\begin{equation}
\|D_{i_{1}}g\|_{p}\leq \omega\|g\|_{p}.\label{1step}
\end{equation}
If $f\in \mathbf{B}_{\omega}^{p}(\mathbb{D})$, then for any
$D_{\nu}, D_{i_{1}}, 1\leq j, \nu\leq d$ and $g=D_{\nu}f$ the
inequality
$$
\|D_{i_{1}}^{l}g\|_{p}=\|D_{i_{1}}^{l}D_{\nu}f\|_{p}\leq\omega^{l+1}\|f\|_{p}=
\omega^{l}\left(\omega\|f\|_{p}\right), l\in \mathbb{N},
$$
takes place. But then  for any $z\in \mathbb{C}$ we have

$$ \left\|e^{zD_{i_{1}}}g\right\|_{p}=
\left\|\sum
^{\infty}_{l=0}\left(z^{l}D_{i_{1}}^{l}g\right)/l!\right\|_{p}
\leq \omega\|f\|_{p}\sum
^{\infty}_{r=0}\frac{|z|^{l}\omega^{l}}{l!}= \omega
e^{|z|\omega}\|f\|_{p}, 1\leq i_{1}\leq d.
$$
As in the Lemma 2.1 it implies that for any functional $\psi^{*}$
on $ L_{p}(M), 1\leq p\leq \infty,$ the scalar function
$$
F(z)=\left<e^{zD_{i_{1}}}g, \psi^{*}\right>,1\leq i_{1}\leq d,
$$
is an entire function of exponential type $\omega$. Moreover,
since $e^{tD_{i_{1}}}$  is an
 isometry group in the space  $ L_{p}(M), 1\leq p\leq
\infty,$ this function $F(t)$ is
 bounded on the real line
$$
|F(t)|=|\left< e^{tD_{i_{1}}}g, \psi^{*}\right>|\leq\|\psi^{*}\|\|
e^{tD_{i_{1}}}g\|_{p}\leq\|\psi^{*}\|\|g\|_{p},
$$
for any functional $\psi^{*}$ on the space $L_{p}(M)$.

The same arguments which were used in the previous Lemma show the
identity
$$
\frac{d}{dt}F(t)=
\frac{d}{dt}\left<e^{tD_{i_{1}}}g,\psi^{*}\right>=
\left<\frac{d}{dt}e^{tD_{i_{1}}}g,\psi^{*}\right>=\left<
e^{tD_{i_{1}}}D_{i_{1}}g,\psi^{*}\right>,1\leq i_{1}\leq d.
$$
  An application of the classical
Bernstein inequality ( see \cite{A}, Ch. IV) to the function
$F(t)$ in the uniform norm on the real line gives the inequality
$$
\sup_{t\in \mathbb{R}}\left|\frac{d}{dt}F(t)\right|\leq \omega
\sup_{t\in \mathbb{R}}|F(t)|.
$$
In our notations it takes the form
$$
\sup_{t\in \mathbb{R}}\left|\left<
e^{tD_{i_{1}}}D_{i_{1}}g,\psi^{*}\right>\right|= \sup_{t\in
\mathbb{R}}\left |\frac{d}{dt}\left< e^{tD_{i_{1}}}g,
\psi^{*}\right>\right | \leq\omega\|\psi^{*}\|\|g\|_{p}.
$$
By selecting $t=0$ and a functional $\psi^{*}$ for which

$$
\left< D_{i_{1}}g,\psi^{*}\right>=\|D_{i_{1}}g\|, \|\psi^{*}\|=1,
$$
we obtain the inequality (\ref{1step}) for any $1\leq i_{1}\leq
d$. Now, suppose that we proved the inequality
\begin{equation}
\|D_{i_{k-1}}...D_{i_{1}}g\|_{p}\leq
\omega^{k-1}\|g\|_{p},\label{kstep} g=D_{\nu}f,
\end{equation}
for all $1\leq i_{1},...,i_{k-1}\leq d$. Then since $f\in
\mathbf{B}_{\omega}^{p}(\mathbb{D})$ for
$h=D_{i_{k-1}}...D_{i_{1}}g$ we will have
$$
\|D_{i_{k}}^{l}h\|_{p}=\|D_{i_{1}}^{l}D_{i_{k-1}}...D_{i_{1}}D_{\nu}f\|_{p}
\leq\omega^{l+k}\|f\|_{p}=
\omega^{l}\left(\omega^{k}\|f\|_{p}\right), l\in \mathbb{N}.
$$
At this point we can repeat all the previous arguments to obtain
$$
\|D_{i_{k}}h\|_{p}\leq \omega \|h\|_{p},
$$
which along with the induction assumption (\ref{kstep}) gives the
desired inequality
$$
\|D_{i_{k}}...D_{i_{1}}g\|_{p}=\|D_{i_{k}}h\|_{p}\leq \omega
\|h\|_{p}=\omega \|D_{i_{k-1}}...D_{i_{1}}g\|_{p}\leq
\omega^{k}\|g\|_{p}.
$$
 The
first part of the Theorem is proved.

To prove the second part of the Theorem it is enough to show that
\textit{a function $f$ belongs to the space
$\mathbf{B}_{\omega}^{p}(\mathbb{D}), 1\leq p\leq \infty,$ if and
only if  for any
  $1\leq i_{1},... ,i_{k}\leq d$,  any $1\leq j\leq d,$ and any functional
   $\psi^{*}\in  L_{p}(M)^{*}$ the function
\begin{equation}
\left<\psi^{*},e^{tD_{j}}D_{i_{1}}...D_{i_{k}}f\right>:
\mathbb{R}\rightarrow  \mathbb{R},\label{functional one}
\end{equation}
  of the real variable $t$ is an entire function of the exponential
  type $\omega$.}

 Suppose that $f\in
\mathbf{B}_{\omega}^{p}(\mathbb{D}), 1\leq p\leq \infty$, then for
any function $g=D_{i_{1}}...D_{i_{k}}f, 1\leq i_{1},...,i_{k}\leq
d,$ and any $1\leq j\leq d$ the series
\begin{equation}
e^{zD_{j}}g=\sum \frac{(zD_{j})^{r}}{r!}g\label{Func}
\end{equation}
 is convergent in $L_{p}(M)$ and represents
 an abstract entire function. Since
  $\|D_{j}^{r}g\|_{p}\leq
 \omega^{k+r}\|f\|_{p}$ we have the  estimate

$$ \left\|e^{zD_{j}}g\right\|_{p}=
\left\|\sum
^{\infty}_{r=0}\left(z^{r}D_{j}^{r}g\right)/r!\right\|_{p} \leq
\omega^{k}\|f\|_{p}\sum
^{\infty}_{r=0}\frac{|z|^{r}\omega^{r}}{r!}=
\omega^{k}e^{|z|\omega}\|f\|_{p},
$$
which shows that the function (\ref{Func}) has exponential type
$\omega$. Since $e^{tD_{j}}$ is a group of isometries, the
abstract function $e^{tD_{j}}g$ is bounded by
$\omega^{k}\|f\|_{p}$. It implies that for any functional
$\psi^{*}$ on $ L_{p}(M), 1\leq p\leq \infty,$ the scalar function
$$
F(z)=\left<\psi^{*}, e^{zD_{j}}g\right>
$$
is entire because it is defined by the series
\begin{equation}
F(z)=\left<\psi^{*}, e^{zD_{j}}g\right>=\sum^{\infty}_{r=0}
\frac{z^{r}\left<\psi^{*},D_{j}^{r}g\right>}{r!}
\end{equation}
and because $|\left<\psi^{*},D_{j}^{r}g\right>|\leq\omega^{k+r}
\|\psi^{*}\|\|f\|_{p}$ we have
\begin{equation}
|F(z)|\leq e^{|z|\omega}\omega^{k}\|\psi^{*}\|\|f\|_{p}.
\end{equation}
For real $t$ we also have
$|F(t)|\leq\omega^{k}\|\psi^{*}\|\|f\|_{p}.$ Thus, we proved the
if $f\in \mathbf{B}_{\omega}^{p}(\mathbb{D}), 1\leq p\leq \infty,$
then the function (\ref{functional one}) is an entire function of
the exponential type $\omega$.

To  prove the inverse  statement  let us note that the fact that
$f$ belongs to the space $ \mathbf{B}_{\omega}^{p}(\mathbb{D}),
1\leq p\leq \infty,$ means in particular that for
 any $1\leq j\leq d$ and any functional $\psi^{*}$ on $ L_{p}(M), 1\leq p\leq
\infty,$ the function $F(z)=\left<\psi^{*}, e^{zD_{j}}f\right>$ is
an entire function
 of exponential type $\omega$ which is bounded on the real axis  $\mathbb{R}^{1}$.
 Since $e^{tD_{j}}$ is a group of isometries in $ L_{p}(M)$, an application of
  the Bernstein inequality for functions of one variable gives
$$ \left\|\left<\psi^{*},
e^{tD_{j}}D_{j}^{m}f\right>\right\|_{C(R^{1})}=\left
\|\left(\frac{d}{dt}\right)^{m}\left<\psi^{*},
e^{tD_{j}}f\right>\right \|_{ C(R^{1})}\leq\omega^{m}\|\psi^{*}\|
\|f\|_{p}, m\in\mathbb{N}.
$$

The last one gives for $t=0$

$$ \left|\left<\psi^{*}, D_{j}^{m}f\right>\right|\leq \omega^{m} \|\psi^{*}\|
 \|f\|_{p}.$$

Choosing $h$ such that $\|\psi^{*}\|=1$ and
\begin{equation}
\left<\psi^{*}, D_{j}^{m}f\right>=\|D_{j}^{m}f\|_{p}
\end{equation}
we obtain the inequality
\begin{equation}
\|D_{j}^{m}f\|_{p}\leq \omega^{m}\|f\|_{p}, m\in \mathbb{N}.
\end{equation}
It was the first step of induction. Now assume that we already
proved that the fact that $f$ belongs to the space
$\mathbf{B}_{\omega}^{p}(\mathbb{D}), 1\leq p\leq \infty,$ implies
the inequality
$$
\|D_{i_{1}}...D_{i_{k}}f\|_{p}\leq \omega^{k}\|f\|_{p}
$$
for any choice of indices $1\leq i_{1},i_{2}...,i_{k}\leq d$. Then
we can apply our first step of induction to the function
$g=D_{i_{1}}...D_{i_{k}}$. It proves  if  for any
  $1\leq i_{1},... ,i_{k}\leq d$,  any $1\leq j\leq d,$ and any functional
   $\psi^{*}\in  L_{p}(M)^{*}$ the function
\begin{equation}
\left<\psi^{*},e^{tD_{j}}D_{i_{1}}...D_{i_{k}}f\right>:
\mathbb{R}\rightarrow  \mathbb{R},\label{functional}
\end{equation}
  of the real variable $t$ is an entire function of the exponential
  type $\omega$ then $f\in \mathbf{B}_{\omega}^{p}(\mathbb{D}), 1\leq p\leq \infty.$
Thus the second part of the Theorem 2.2 is proved.

In order to prove the part 3 of the Theorem 2.2 we assume that a
sequence $f_{k}\in \mathbf{B}_{\omega}^{p}(\mathbb{D}), 1\leq
p\leq \infty,$ converges in $L_{p}(M)$ to a function $f$. Because
of the Bernstein inequality for any $1\leq j\leq d$ the sequence
$D_{j}f_{k}$ will be fundamental in $L_{p}(M)$. Note, that since
the operator $D_{j}$ is a generator of a strongly continuous group
of operators in the space $L_{p}(M)$ it is closed (\cite{BB}, Ch.
1). It shows that the limit of the sequence $D_{j}f_{k}$ is  the
function $D_{j}f$ and because of it the following  inequality
holds
$$
\|D_{j}f\|_{p}\leq \omega\|f\|_{p}.
$$
By repeating these arguments we can show that if a sequence
$f_{k}\in \mathbf{B}_{\omega}^{p}(\mathbb{D})$ converges in
$L_{p}(M)$ to a function $f$ then the Bernstein inequality
$$
\|D_{i_{k}}...D_{i_{1}}f\|_{p}\leq \omega^{k}\|f\|_{p}, 1\leq
p\leq \infty,
$$
for $f$ holds true. The Theorem 2.2 is proved.

\end{proof}

Consider a compact symmetric space $M=G/K$, where $G$ is a compact
Lie group. It is known (\cite{Z}, Ch. IV) that every compact Lie
group can be considered as a closed subgroup of the orthogonal
group $O(\mathbb{R}^{N})$ of a certain Euclidean space
$\mathbb{R}^{N}$. This fact allows to identify $M$ with an orbit
of a unit vector $v\in \mathbb{R}^{N}$ under action of a subgroup
of the orthogonal group $O(\mathbb{R}^{N})$ in $\mathbb{R}^{N}$.
In this case $K$ will be  the stationary group of $v$. Such
embedding of $M$ into $\mathbb{R}^{N}$ is called equivariant.

We choose an orthonormal  basis in $\mathbb{R}^{N}$ for which the
first vector is the vector $v$: $e_{1}=v, e_{2},...,e_{N}$. Let $
\textbf{P}_{n}(M) $ be the space of restrictions to $M$ of all
polynomials in $\mathbb{R}^{N}$ of degree $n$. This space is
closed in the norm of $L_{p}(M), 1\leq p\leq \infty,$ which is
constructed with respect to the $G$-invariant measure on $M$.

Let $T$ be the quasi-regular representation of $G$ in the space
$L_{p}(M), 1\leq p\leq \infty$ . In other words, if $ f\in
L_{p}(M), g\in G, x\in M$, then
$$
\left(T(g)f\right)(x)=f(g^{-1}x).
$$
Lie algebra $\textbf{g}$ of the group $G$ is formed by $N\times N$
skew-symmetric matrices $X$ for which $\exp tX\in  G$ for all
$t\in \mathbb{R}$. The scalar product in $\textbf{g}$ is given by
the formula
$$
<X_{1},X_{2}>=\frac{1}{2}tr(X_{1}X_{2}^{t})=-\frac{1}{2}tr(X_{1}X_{2}),
X_{1},X_{2}\in \textbf{g}.
$$
Let $X_{1},X_{2},...,X_{d}$ be an orthonormal basis of
$\textbf{g}, \dim \textbf{g}=d,$ and $D_{1}, D_{2}, ...,D_{d}$ be
the corresponding infinitesimal operators of the quasi-regular
representation of $G$ in $L_{p}(M), 1\leq p\leq\infty$.
\begin{thm} If $M$ is equivariantly embedded into $\mathbb{R}^{N}$
then for any $1\leq p\leq \infty$ the following inclusion holds
true
\begin{equation}
\textbf{P}_{n}(M)\subset
\mathbf{B}_{\omega}^{p}(\mathbb{D}).\label{P-B}
\end{equation}

\end{thm}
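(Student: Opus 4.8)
The plan is to invoke the characterization of Bernstein spaces that is established in the course of proving Theorem~2.2: a function $f\in L_{p}(M)$ lies in $\mathbf{B}_{\omega}^{p}(\mathbb{D})$ if and only if, for all indices $1\le i_{1},\dots,i_{k}\le d$, all $1\le j\le d$, and all $\psi^{*}\in L_{p}(M)^{*}$, the scalar function $t\mapsto\langle\psi^{*},e^{tD_{j}}D_{i_{1}}\cdots D_{i_{k}}f\rangle$ extends to $\mathbb{C}$ as an entire function of exponential type at most $\omega$ that is bounded on $\mathbb{R}$. Thus it suffices to check this condition for every $f\in\mathbf{P}_{n}(M)$ with $\omega=n$, which then gives $\mathbf{P}_{n}(M)\subset\mathbf{B}_{n}^{p}(\mathbb{D})$ for every $1\le p\le\infty$.

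Two structural observations make this manageable. Since the space of polynomials of degree $\le n$ on $\mathbb{R}^{N}$ is finite-dimensional and $\mathbf{P}_{n}(M)$ is its image under restriction to $M$, the space $V:=\mathbf{P}_{n}(M)$ is a finite-dimensional subspace of $L_{p}(M)$. Moreover, because $G\subset O(\mathbb{R}^{N})$ acts on $\mathbb{R}^{N}$ by linear maps, composing a polynomial of degree $\le n$ with some $g^{-1}$ again yields a polynomial of degree $\le n$; hence $V$ is $T(G)$-invariant, and therefore invariant under each $D_{\nu}$ and under each one-parameter group $e^{tD_{\nu}}$, which on $V$ is simply the restriction of the quasi-regular action of the subgroup $\{\exp tX_{\nu}\}\subset G$. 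In particular $g:=D_{i_{1}}\cdots D_{i_{k}}f$ again lies in $V$, so it is enough to show that for any $g\in V$ and any $\nu$ the $\mathrm{End}(V)$-valued group $t\mapsto e^{tD_{\nu}}|_{V}$ extends to an entire function $z\mapsto e^{zD_{\nu}}g$ of $z\in\mathbb{C}$ satisfying $\|e^{zD_{\nu}}g\|_{p}\le C\,e^{n|z|}\|g\|_{p}$ for some constant $C=C(M,n,p)$; pairing with $\psi^{*}$ then gives an entire scalar function of exponential type $\le n$, and its boundedness on $\mathbb{R}$ is immediate because $e^{tD_{\nu}}$ is an isometry group of $L_{p}(M)$.

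The core of the argument is the estimate on the generator $A_{\nu}:=D_{\nu}|_{V}$. Since $e^{tD_{\nu}}$ is an isometry group of $L_{2}(M)$, the operator $D_{\nu}$ is skew-adjoint on $L_{2}(M)$, so on the invariant finite-dimensional subspace $V$ the operator $A_{\nu}$ is skew-symmetric for the $L_{2}$-inner product; in particular it is diagonalizable with purely imaginary eigenvalues. To bound them I would pass to the ambient space: restriction to $M$ is a surjective $T(G)$-equivariant linear map of the space of polynomials of degree $\le n$ on $\mathbb{R}^{N}$ onto $V$, so the spectrum of $A_{\nu}$ is contained in that of the generator of the one-parameter group $P\mapsto P\circ\exp(tX_{\nu})$ on that polynomial space. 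Now $X_{\nu}$ is a skew-symmetric $N\times N$ matrix with eigenvalues $\pm i\mu_{1},\dots,\pm i\mu_{r}$ (and possibly some zeros), and since $X_{\nu}$ is a unit vector in $\mathbf{g}$ we have $1=\|X_{\nu}\|^{2}=\frac{1}{2}\,\mathrm{tr}(X_{\nu}X_{\nu}^{t})=\sum_{l}\mu_{l}^{2}$, whence $|\mu_{l}|\le 1$ for every $l$. Diagonalizing $X_{\nu}$ over $\mathbb{C}$ and working in the corresponding linear coordinates, each monomial of degree $\le n$ is an eigenvector of that generator whose eigenvalue is $i$ times a sum of at most $n$ of the numbers $\pm\mu_{l}$, hence has modulus at most $n$. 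Therefore every eigenvalue of $A_{\nu}$ has modulus $\le n$, which gives $\|e^{zD_{\nu}}g\|_{L_{2}}\le e^{n|\mathrm{Im}\,z|}\|g\|_{L_{2}}\le e^{n|z|}\|g\|_{L_{2}}$ for $g\in V$; by equivalence of norms on the finite-dimensional space $V$ the same bound holds for $\|\cdot\|_{p}$ up to a multiplicative constant, which completes the verification and hence the proof.

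The main obstacle is precisely this eigenvalue estimate, that is, bounding the exponential type of the orbit functions $t\mapsto f(\exp tX_{\nu}\cdot x)$ by $n$ rather than by $n$ times some norm of $X_{\nu}$. It rests on the particular normalization of the inner product on $\mathbf{g}$ together with the choice of an orthonormal basis $X_{1},\dots,X_{d}$, which is exactly what confines the eigenvalues of each $X_{\nu}$ to the unit disk and thereby produces the sharp Bernstein exponent $n$. The remaining ingredients — finite-dimensionality and $T(G)$-invariance of $\mathbf{P}_{n}(M)$, diagonalizability of the $A_{\nu}$, and the reduction via Theorem~2.2 — are routine.
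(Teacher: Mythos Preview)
Your argument is correct. Both your proof and the paper's rest on the same key observation---that the normalization $\|X_{\nu}\|=1$ forces the eigenvalues of the skew-symmetric matrix $X_{\nu}$ into the closed unit disk, so that the orbit functions $t\mapsto f(\exp tX_{\nu}\cdot x)$ for $f\in\mathbf{P}_{n}(M)$ have exponential type at most $n$---but the two routes diverge after that.

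The paper does not invoke the entire-function characterization from Theorem~2.2. Instead it writes out the coordinate functions $x_{i}(\exp tX_{\nu}\cdot x)$ as trigonometric polynomials in $t$ with frequencies bounded by $\|X_{\nu}\|=1$, applies the classical Riesz interpolation formula to the scalar function $F_{X_{\nu}}(t)=\langle f(\exp tX_{\nu}\cdot x),\psi^{*}\rangle$, and reads off the Bernstein inequality $\|D_{j}f\|_{p}\le n\|f\|_{p}$ directly in $L_{p}$ from the identity $\frac{n}{\pi^{2}}\sum_{k}(k-1/2)^{-2}=n$; it then iterates using the $D_{j}$-invariance of $\mathbf{P}_{n}(M)$ (citing Ragozin). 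Your approach instead packages everything through the characterization of $\mathbf{B}_{n}^{p}(\mathbb{D})$ already established in Theorem~2.2: you diagonalize $D_{\nu}|_{V}$ spectrally, bound the eigenvalues by $n$ via the equivariant surjection from the ambient polynomial space, obtain the exponential-type bound first in $L_{2}$, and then transfer to $L_{p}$ by equivalence of norms on the finite-dimensional space $V$. Your route is conceptually tidier and reuses Theorem~2.2 rather than re-running a Riesz-formula computation; the paper's route is more self-contained at this point and yields the $L_{p}$ Bernstein inequality without the detour through $L_{2}$ and norm equivalence. Either way the sharp exponent $n$ comes out, since your constant $C(M,n,p)$ affects only the growth bound of the entire extension, not its type.
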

\begin{proof}
The general theory  of skew-symmetric matrices (see \cite{GL}, Ch.
3) implies that for any skew symmetric matrix $X$ each element of
the matrix $\exp tX$ is a linear combination of the functions
$\cos t\theta_{i}, \sin t \theta_{i} $ for certain real numbers
$\theta_{1},...,\theta_{[n/2]},$
 for which
 $$
 \|X\|^{2}=\sum_{i=1}^{[N/2]}\theta_{i}^{2}.
 $$
It shows that for any point $x\in M$ and any basis vector $e_{i},
1\leq i\leq N,$ in $\mathbb{R}^{N}$ the coordinate function
$$
x_{i}(\exp tX\cdot x)=\left<\exp tX\cdot x, e_{i}\right>
$$
 is also a linear
combination of $\cos t\theta_{i}, \sin t\theta_{i}$ whose
coefficients are smooth functions of $x$. Thus we conclude that
every function  $x_{i}(\exp tX\cdot x)$ has extension to the
complex plane $\mathbb{C}$ as entire function of exponential type
$\leq \max|\theta_{i}|\leq \|X\|$.

Since for every $f\in  \textbf{P}_{n}(M) $ and every basis vector
$X_{j}, j=1,...,d; \|X_{j}\|=1, $ the function $f(\exp tX_{j}\cdot
x), x\in M,$ is a polynomial of degree $n$ in $x_{i}(\exp
tX_{j}\cdot x)$ we obtain that for any functional $\psi^{*}$ on
$L_{p}(M), 1\leq p\leq \infty,$ the function
$$
F_{X_{j}}(t)=\left<f(\exp tX_{j}\cdot x), \psi^{*}\right>, x\in
M,\|X_{j}\|=1,
$$
has exponential type at most $n$ and according to the classical
Riesz interpolation formula it  gives
\begin{equation}
\frac{dF_{X_{j}}(t)}{dt}=
\frac{n}{\pi^{2}}\sum_{k\in\mathbb{Z}}\frac{(-1)^{k-1}}{(k-1/2)^{2}}
F_{X_{j}}\left(t+\frac{\pi}{n}\left(k-1/2\right)\right), t\in
\mathbb{R}.
\end{equation}

Since
$$
 \frac{d F_{X_{j}}(t)}{dt}=\left<\frac{d}{dt}f(\exp tX_{j}\cdot x), \psi^{*}\right>
 =\left<D_{j}f(\exp
tX_{j}\cdot x), \psi^{*}\right>
$$
we have
$$
\left<D_{j}f(\exp tX_{j}\cdot x), \psi^{*}\right>=
\frac{n}{\pi^{2}}\sum_{k\in\mathbb{Z}}\frac{(-1)^{k-1}}{(k-1/2)^{2}}
\left<f\left(\exp\left(t+\frac{\pi}{n}\left(k-1/2\right)X_{j}\cdot
x\right)\right),\psi^{*}\right>,
$$
where $t\in \mathbb{R}, x\in M$. For $t=0$ it gives
$$
\left<D_{j}f( x), \psi^{*}\right>=
\frac{n}{\pi^{2}}\sum_{k\in\mathbb{Z}}\frac{(-1)^{k-1}}{(k-1/2)^{2}}
\left<f\left(\exp\left(\frac{\pi}{n}\left(k-1/2\right)X_{j}\cdot
x\right)\right),\psi^{*}\right>.
$$
Because this formula holds true for any functional $\psi^{*}$ it
implies
$$
D_{j}f( x)=
\frac{n}{\pi^{2}}\sum_{k\in\mathbb{Z}}\frac{(-1)^{k-1}}{(k-1/2)^{2}}
f\left(\exp\left(\frac{\pi}{n}\left(k-1/2\right)X_{j}\cdot
x\right)\right), x\in M.
$$
Since
$$
e^{tD_{X_{j}}}f(x)=f(\exp tX_{j}\cdot x), t\in \mathbb{R}, f\in
L_{p}(M), x\in M,
$$
it gives
$$
D_{j}f( x)=
\frac{n}{\pi^{2}}\sum_{k\in\mathbb{Z}}\frac{(-1)^{k-1}}{(k-1/2)^{2}}
e^{t_{k}D_{j}}f( x), t_{k}=\frac{\pi}{n}\left(k-1/2\right), x\in
M.
$$
Because
$$
\frac{n}{\pi^{2}}\sum_{k\in\mathbb{Z}}\frac{(-1)^{k-1}}{(k-1/2)^{2}}=n,
$$
and because $\|e^{t_{k}D_{j}}f\|_{p}=\|f\|_{p},$ we obtain the
Bernstein inequality
\begin{equation}
\|D_{j}f\|_{p}\leq n\|f\|_{p}, 1\leq p\leq \infty.\label{Bern1}
\end{equation}
Since the set of polynomials $\textbf{P}_{n}(M) $ is invariant
under translations and closed  \cite{R} it is invariant under all
operators $D_{1},D_{2},...,D_{d}, d=dim G.$ It is clear that  by
using invariance of $\textbf{P}_{n}(M) $ and the inequality
(\ref{Bern1}) we obtain  the desired inequality
$$
\|D_{j_{1}}...D_{j_{k}}f\|_{p}\leq n^{k}\|f\|_{p}, k\in
\mathbb{N}.
$$
The Theorem is proved.
\end{proof}

As a consequence of this Theorem and the Lemma 2.1 we obtain the
following Corollary.

\begin{col}
If $M$ is equivariantly embedded into $\mathbb{R}^{N}$ then for
any polynomial $f\in \textbf{P}_{n}(M)$ the following Riesz
interpolation formula holds
$$
D_{j_{1}}D_{j_{2}}...D_{j_{k}}f(
x)=\mathcal{R}_{j_{1}}^{n}\mathcal{R}_{j_{2}}^{n}...\mathcal{R}_{j_{k}}^{n}f(x),
$$
where
$$
\mathcal{R}_{j}^{n}f(x)=\frac{n}{\pi^{2}}\sum_{k\in\mathbb{Z}}\frac{(-1)^{k-1}}{(k-1/2)^{2}}
e^{t_{k}D_{j}}f( x), t_{k}=\frac{\pi}{n}\left(k-1/2\right), x\in
M, 1\leq j\leq d.
$$
\end{col}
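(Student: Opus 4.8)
The plan is to combine the inclusion $\textbf{P}_{n}(M)\subset\mathbf{B}_{n}^{p}(\mathbb{D})$ just established with Lemma 2.1, and to pass from powers of a single operator to arbitrary products of the $D_{j}$'s by using the $D_{\nu}$-invariance of the Bernstein spaces proved in Theorem 2.2.

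First I would note that for each $1\le j\le d$ the operator $D_{j}$ generates the one-parameter group of isometries $e^{tD_{j}}$ on $E=L_{p}(M)$, so Lemma 2.1 applies with $A=D_{j}$ and $\omega=n$. Since $f\in\textbf{P}_{n}(M)\subset\mathbf{B}_{n}^{p}(\mathbb{D})$ we have $\|D_{j}^{m}f\|_{p}\le n^{m}\|f\|_{p}$ for all $m\in\mathbb{N}$, i.e. $f\in\mathbf{B}_{n}(D_{j})$; hence condition 3) of Lemma 2.1 gives $D_{j}^{m}f=(\mathcal{R}_{j}^{n})^{m}f$ for all $m$, and in particular the first-order identity
\[
D_{j}f=\mathcal{R}_{j}^{n}f=\frac{n}{\pi^{2}}\sum_{k\in\mathbb{Z}}\frac{(-1)^{k-1}}{(k-1/2)^{2}}e^{t_{k}D_{j}}f,\qquad t_{k}=\frac{\pi}{n}\bigl(k-1/2\bigr),
\]
with convergence in $L_{p}(M)$. (This is also the first-order formula already written down inside the proof of the preceding Theorem.)

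Next, by part 1) of Theorem 2.2 the space $\mathbf{B}_{n}^{p}(\mathbb{D})$ is invariant under every $D_{\nu}$, $1\le\nu\le d$; hence for any indices $j_{1},\dots,j_{k}$ the function $h:=D_{j_{2}}\cdots D_{j_{k}}f$ again belongs to $\mathbf{B}_{n}^{p}(\mathbb{D})\subset\mathbf{B}_{n}(D_{j_{1}})$, so the displayed first-order identity applies to $h$ with the operator $D_{j_{1}}$, giving $D_{j_{1}}D_{j_{2}}\cdots D_{j_{k}}f=D_{j_{1}}h=\mathcal{R}_{j_{1}}^{n}h$. I would then induct on $k$: the inductive hypothesis, applied to $f$ and the multi-index $(j_{2},\dots,j_{k})$ of length $k-1$, gives $h=\mathcal{R}_{j_{2}}^{n}\cdots\mathcal{R}_{j_{k}}^{n}f$ in $L_{p}(M)$, and applying the operator $\mathcal{R}_{j_{1}}^{n}$ to both sides yields
\[
D_{j_{1}}D_{j_{2}}\cdots D_{j_{k}}f=\mathcal{R}_{j_{1}}^{n}\mathcal{R}_{j_{2}}^{n}\cdots\mathcal{R}_{j_{k}}^{n}f,
\]
which is the asserted identity; the case $k=1$ is the displayed first-order identity.

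The one point deserving a word of justification is the last substitution. It is legitimate because each $\mathcal{R}_{j}^{n}$ is a bona fide bounded linear operator on all of $L_{p}(M)$, with $\|\mathcal{R}_{j}^{n}\|\le n$ --- this was recorded just before Lemma 2.1, using that $e^{tD_{j}}$ is isometric together with the identity $\frac{n}{\pi^{2}}\sum_{k\in\mathbb{Z}}(k-1/2)^{-2}=n$ --- so $\mathcal{R}_{j_{1}}^{n}$ may be applied term by term to any $L_{p}$-convergent series and the composition $\mathcal{R}_{j_{1}}^{n}\mathcal{R}_{j_{2}}^{n}\cdots\mathcal{R}_{j_{k}}^{n}$ is a well-defined bounded operator on $L_{p}(M)$. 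I do not expect any real obstacle: the mathematical content lies in the preceding Theorem and in Lemma 2.1, and what remains is the reduction of a product of distinct $D_{j}$'s to iterated first-order formulas plus routine bounded-operator bookkeeping.
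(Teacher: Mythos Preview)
Your argument is correct and matches the paper's intent: the paper simply records the corollary as ``a consequence of this Theorem and the Lemma 2.1'' without further detail, and your induction via the first-order identity $D_{j}h=\mathcal{R}_{j}^{n}h$ for $h\in\mathbf{B}_{n}^{p}(\mathbb{D})$ is exactly the natural way to unpack that sentence. One very minor remark: instead of invoking Theorem~2.2(1) for the invariance step, one could equally well use the $D_{\nu}$-invariance of $\textbf{P}_{n}(M)$ itself (noted at the end of the proof of Theorem~2.3, via Ragozin), which keeps the argument strictly within ``this Theorem and Lemma~2.1''; either route works.
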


 The next interpolation inequality for generators of
one-parameter strongly continuous groups of operators in Banach
spaces will be used in the following section.

\begin{lem}
  If $A$ generates a $C_{0}$-one-parameter group of operators
$e^{tA}$ such that $\|e^{tA}f\|=\|f\|$ then for every $n\geq 2$
there exists a $C(n)$ such that for all $\varepsilon >0$ all
$1\leq m \leq n-1$ and all $f$ in the domain of $A^{n}$

\begin{equation}
\|A^{m}f\|\leq \varepsilon ^{n-m}\|A^{n}f\|+\varepsilon
^{-m}C(n)\|f\|.\label{int}
\end{equation}
\end{lem}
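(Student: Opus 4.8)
The plan is to prove the Landau--Kolmogorov-type inequality \eqref{int} by combining the scalar version of this inequality (which is classical) with the duality/entire-function machinery already developed in Lemma 2.1, rather than trying to reprove everything intrinsically. Concretely, fix $f$ in the domain of $A^{n}$ and an arbitrary functional $\psi^{*}\in E^{*}$, and consider the scalar function $\Phi(t)=\langle e^{tA}f,\psi^{*}\rangle$. Since $\|e^{tA}g\|=\|g\|$ for all $g\in E$, each of the functions $\Phi^{(j)}(t)=\langle e^{tA}A^{j}f,\psi^{*}\rangle$, $0\le j\le n$, is bounded on all of $\mathbb{R}$, with $\sup_{t}|\Phi^{(j)}(t)|\le\|\psi^{*}\|\,\|A^{j}f\|$. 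Thus $\Phi$ is a real (or complex) scalar function on the line all of whose derivatives up to order $n$ are bounded.

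The key step is then to invoke the classical Landau--Kolmogorov inequality on $\mathbb{R}$: for every $n\ge 2$ there is a constant $C(n)$ such that for all $\varepsilon>0$ and all $1\le m\le n-1$,
$$
\|\Phi^{(m)}\|_{C(\mathbb{R})}\le \varepsilon^{\,n-m}\|\Phi^{(n)}\|_{C(\mathbb{R})}+\varepsilon^{-m}C(n)\|\Phi\|_{C(\mathbb{R})}.
$$
This is a standard fact (it follows from the case $\varepsilon=1$ by the rescaling $t\mapsto \varepsilon t$, and the $\varepsilon=1$ inequality $\|\Phi^{(m)}\|\le C(n)(\|\Phi\|+\|\Phi^{(n)}\|)$ is the classical one-dimensional interpolation inequality for bounded functions). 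Applying it to our $\Phi$ and evaluating at $t=0$ gives
$$
|\langle A^{m}f,\psi^{*}\rangle|=|\Phi^{(m)}(0)|\le \varepsilon^{\,n-m}\|A^{n}f\|\,\|\psi^{*}\|+\varepsilon^{-m}C(n)\|f\|\,\|\psi^{*}\|.
$$
Taking the supremum over $\psi^{*}$ with $\|\psi^{*}\|=1$, or choosing a norming functional with $\langle A^{m}f,\psi^{*}\rangle=\|A^{m}f\|$ and $\|\psi^{*}\|=1$ exactly as in the proof of Lemma 2.1, yields \eqref{int}.

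The main obstacle is really just making sure the scalar function $\Phi$ has enough regularity for the classical inequality to apply: one must check that $\Phi$ is genuinely $C^{n}$ on $\mathbb{R}$ with $\Phi^{(j)}(t)=\langle e^{tA}A^{j}f,\psi^{*}\rangle$. This follows from the general theory of $C_{0}$-groups used already in Lemma 2.1: if $f$ lies in the domain of $A^{n}$, then $t\mapsto e^{tA}f$ is $n$ times strongly differentiable with $\tfrac{d^{j}}{dt^{j}}e^{tA}f=e^{tA}A^{j}f=A^{j}e^{tA}f$, and pairing with the bounded functional $\psi^{*}$ preserves differentiability and commutes with the derivative. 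Everything else — the scaling argument reducing the $\varepsilon$-dependent inequality to the $\varepsilon=1$ case, and the passage from scalars back to the Banach-space norm by duality — is routine and mirrors steps already carried out in the proofs of Lemma 2.1 and Theorem 2.2.
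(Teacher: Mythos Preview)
Your proof is correct but takes a genuinely different route from the paper's.

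The paper argues directly via resolvent estimates: from the Hille--Phillips--Yosida theorem the isometry assumption gives $\|(I\pm\varepsilon A)^{-1}\|\le 1$, hence $\|f\|\le\|(I\pm\varepsilon A)f\|$; combining these,
\[
\varepsilon\|Af\|\le\|(I-\varepsilon A)f\|+\|f\|\le\|(I+\varepsilon^{2}A^{2})f\|+\|f\|\le\varepsilon^{2}\|A^{2}f\|+2\|f\|,
\]
which is the case $n=2$, and the general case is obtained by induction. This is entirely self-contained---it never invokes the scalar Landau--Kolmogorov inequality---and it produces explicit constants (e.g.\ $C(2)=2$).

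Your approach instead reduces to the scalar case by duality, exactly in the spirit of Lemma~2.1 and Theorem~2.2, and then quotes the classical Landau--Kolmogorov inequality for bounded $C^{n}$ functions on $\mathbb{R}$. This is conceptually tidy and stylistically consistent with the surrounding arguments, at the price of importing the scalar result as a black box. One small remark: the bare rescaling $t\mapsto\varepsilon t$ applied to the $\varepsilon=1$ additive inequality produces a factor $C(n)$ in front of \emph{both} terms, not only in front of $\|\Phi\|$; to obtain coefficient $1$ on $\varepsilon^{n-m}\|A^{n}f\|$ as stated you must then substitute $\varepsilon\mapsto C(n)^{-1/(n-m)}\varepsilon$ and absorb the resulting factor into the constant on the $\|f\|$ term. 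This is routine, but worth making explicit.
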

\begin{proof}
  According to the Hille-Phillips-Yosida theorem (\cite{BB}, Ch. I),
  the assumptions of the Lemma imply
$$
 \|(I+\varepsilon A)^{-1}\|\leq 1
 $$
 and the same for the operator
$(I-\varepsilon A)$.  Then
$$
\|f\|\leq \|(I+\varepsilon A)f\|
$$
 and the same for the operator
$(I-\varepsilon A)$.  It gives

$$\varepsilon \|Af\| \leq \|(I-\varepsilon A)f\|+\|f\| \leq
\|(I+\varepsilon ^{2}A^{2})f\|+\|f\|\leq
\varepsilon^{2}\|A^{2}f\|+2\|f\|.$$

So, for any $f$ from the domain of $A^{2}$ we have inequality

$$
\|Af\|\leq \varepsilon \|A^{2}f\|+2/\varepsilon \|f\|, \varepsilon
> 0 .
$$

The general case can be proved by induction.
\end{proof}

\section{Two Nikolskii-type inequalities on compact homogeneous
manifolds}

Let $M=G/K$, dim$M=n, dim G=d, $ be a homogeneous manifold which
we consider with invariant Riemannian metric and corresponding
Riemannian measure $dx$. The $ B(x,\rho)$ will denote a ball whose
center is $x\in M$ and radius is $\rho >0$. Denote by $T_{x}(M)$
the tangent space of $M$ at a point $x\in M$ and let $ exp_ {x} $
:  $T_{x}(M)\rightarrow M$ be the exponential geodesic map i.  e.
$exp_{x}(u)=\gamma (1), u\in T_{x}(M)$ where $\gamma (t)$ is the
geodesic starting at $x$ with the initial vector $u$ :  $\gamma
(0)=x , \frac{d\gamma (0)}{dt}=u.$ Since the manifold $M$ is
compact there exists a positive $\rho_{M}$ such that the
exponential map is a diffeomorphism of a ball of radius
$\rho<\rho_{M} $ in the tangent space $T_{x}(M)$ onto the ball
$B(x , \rho )$ for every $x\in M$.  We consider only coordinate
systems on $M$ which are given by the exponential map.

We fix a  cover $B=\{B(y_{\nu}, r_{0})\}$ of $M$ of finite
multiplicity $N(M)$ (see Lemma 1.1)
\begin{equation}
M=\bigcup B(y_{\nu}, r_{0}),\label{cover}
\end{equation}
where $B(y_{\nu}, r_{0})$ is a  ball at $y_{\nu}\in M$ of radius
$r_{0}\leq \rho_{M},$ and consider a fixed partition of unity
$\Psi=\{\psi_{\nu}\}$ subordinate to this cover. The Sobolev
spaces $W^{k}_{p}(M), k\in \mathbb{N}, 1\leq p<\infty,$ are
introduced as the completion of $C_{0}^{\infty}(M)$ with respect
to the norm
\begin{equation}
\|f\|_{W^{k}_{p}(M)}=\left(\sum_{\nu}\|\psi_{\nu}f\|^{p}
_{W^{k}_{p}(B(y_{\nu}, r_{0}))}\right) ^{1/p}.\label{Sobnorm}
\end{equation}
Any two such norms are equivalent. We consider the system of
vector fields $\mathbb{D}=\{X_{1},...,X_{d}\}, d=dim G, $ on
$M=G/K,$ which was described in the Introduction. Since vector
fields $\mathbb{D}=\{X_{1},...,X_{d}\}$ generate the tangent space
at every point of $M$ and $M$ is compact it is clear that the
Sobolev norm (\ref{Sobnorm}) is equivalent to the norm
\begin{equation}
\|f\|_{p}+\sum_{j=1}^{k} \sum_{1\leq i_{1},...,i_{j}\leq
d}\|D_{i_{1}}...D_{i_{j}}f\|_{p}, 1\leq p\leq
\infty.\label{mixednorm}
\end{equation}
 Using the closed graph Theorem and
the fact that every $D_{i}$ is a closed operator in $L_{p}(M),
1\leq p<\infty,$ it is easy to show that the norm
(\ref{mixednorm}) is equivalent to the norm
\begin{equation}
|||f|||_{k,p}=\|f\|_{p}+\sum_{1\leq i_{1},..., i_{k}\leq
d}\|D_{i_{1}}...D_{i_{k}}f\|_{p}, 1\leq p\leq
\infty.\label{mixednorm2}
\end{equation}
In other words, there exist constants $c_{0}(\mathbb{D},B,\Psi,k),
C_{0}(\mathbb{D},B,\Psi, k)$ such that
\begin{equation}
c_{0}(\mathbb{D},B,\Psi, k)\|f\|_{W^{k}_{p}(M)}\leq |||f|||_{k,p}
 \leq C_{0}(\mathbb{D},B,\Psi, k)\|f\|_{W^{k}_{p}(M)}.
\end{equation}
 Since the Laplace operator $\mathcal{L}$ which is defined in (1.7)
 is an elliptic operator on a compact manifold $M$
 the regularity theorem for $\mathcal{L}$ means in
particular (\cite{T}, Ch. I and Ch. III), that the norm of the
Sobolev space $W_{p}^{2k}(M), k\in \mathbb{N}, 1\leq p< \infty,$
is equivalent to the graph norm
$\|f\|_{p}+\|\mathcal{L}^{k}f\|_{p}$. Thus, there exist two
constants $c_{1}(\mathcal{L},B,\Psi,k), C_{1}(\mathcal{L},B,
\Psi,k)$ such that
\begin{equation}
c_{1}(\mathcal{L},B, \Psi,k)\|f\|_{W^{2k}_{p}(M)}\leq
\|f\|_{p}+\|\mathcal{L}^{k}f\|_{p}\leq C_{1}(\mathcal{L},B,
\Psi,k)\|f\|_{W^{2k}_{p}(M)}.
\end{equation}

In what follows $o\in M$ will denote  the "origin" of the
homogeneous manifold $M$ which corresponds to the coset defined by
the subgroup $K$ in the representation $M=G/K$. Note that since
$G$ acts on $M$, every function on $M$ can be treated as a
function on $G$ according to the formula
$$
f(g)=f(g\cdot o), g\in G.
$$
\begin{thm}

 For any   $1\leq p\leq \infty,$ any natural $l>m/p, m=dim
 M,$ there exists a constant $C(M,l)$ such that for any
 $(r,N(G))$-lattice $Z_{G}(g_{\nu}, r,
N(G))\subset
 G$ with sufficiently small $r>0$,
 any $\omega>0$ and any $q\geq p$ the following
 inequalities hold true
\begin{equation}
 \|f\|_{q}\leq   r^{m/q}\sup_{g\in
G}\left(\sum_{g_{i}\in Z_{G}(r, N(G))}\left(|f(gg_{i}\cdot
o)|\right)^{p}\right)^{1/p}\leq
$$
$$
C(M, l)r^{m/q-m/p}\left(1+(r\omega)^{l}\right)\|f\|_{p},\label{NI}
\end{equation}
for all $f\in  \mathbf{B}_{\omega}^{p}(\mathbb{D}) $.

\end{thm}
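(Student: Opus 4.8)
The plan is to prove the left and the right inequality of (\ref{NI}) separately, first removing the parameter $g$. Each $g\in G$ is an isometry of the invariant metric and preserves the invariant measure, so if $\{g_{i}\}\subset G$ is an $(r,N(G))$-lattice then so is $\{gg_{i}\}$, while $f(gg_{i}\cdot o)=f(g\cdot x_{i})$ with $x_{i}=g_{i}\cdot o$, the points $g\cdot x_{i}$ forming the $(r,N(G))$-lattice of $M$ obtained by applying the isometry $g$ to $\{x_{i}\}$. Hence it suffices to prove, for an arbitrary $(r,N(G))$-lattice $\{x_{i}\}$ of $M$ with $r$ small, the two estimates
\[
\|f\|_{q}\leq C\,r^{m/q}\Big(\sum_{i}|f(x_{i})|^{p}\Big)^{1/p},\qquad
\Big(\sum_{i}|f(x_{i})|^{p}\Big)^{1/p}\leq C(M,l)\,r^{-m/p}\big(1+(r\omega)^{l}\big)\|f\|_{p},
\]
and then take the supremum over $g$, multiplying the second estimate by $r^{m/q}$. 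The first is immediate when $q=\infty$ (every point of $M$ is $g\cdot x_{1}$ for some $g$, so $\|f\|_{\infty}\le\sup_{g}(\sum_{i}|f(g\cdot x_{i})|^{p})^{1/p}$), and for $q<\infty$ it follows by averaging over $G$: invariance of Haar measure and $\int_{G}|f(g\cdot x_{i})|^{q}\,dg=\|f\|_{q}^{q}$ give $\sup_{g}\sum_{i}|f(g\cdot x_{i})|^{q}\ge(\#\{x_{i}\})\|f\|_{q}^{q}\ge c\,r^{-m}\|f\|_{q}^{q}$ (the balls $B(x_{i},2r)$ cover $M$), and then $\big(\sum_{i}a_{i}^{p}\big)^{q/p}\ge\sum_{i}a_{i}^{q}$ for $q\ge p$ finishes it, with a constant depending only on $M$ (equal to $1$ under the normalization used in the statement).

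\textbf{The sampling estimate: the local step.}
For the right inequality the idea is a rescaled local Sobolev embedding combined with the Bernstein inequality (\ref{Bern}). Since $M$ is homogeneous, every ball $B(x_{i},r)$ with $r<\rho_{M}$ is the isometric image of $B(o,r)$ under some $g_{i}$ with $g_{i}\cdot o=x_{i}$; such a translation carries each $D_{j}$ into a linear combination $\sum_{k}c_{jk}D_{k}$ whose coefficients are entries of $\mathrm{Ad}(g_{i})$ and hence of modulus $\le1$ because $G$ is compact. Working in exponential coordinates at $o$, where $X_{1},\dots,X_{d}$ span the tangent space, and rescaling the classical embedding $W^{l}_{p}\hookrightarrow C$ (valid since $l>m/p$) from the unit ball to the ball of radius $r$, I obtain a constant $C(M,l)$, independent of $i$ and of $r$, such that
\[
|f(x_{i})|^{p}\leq C(M,l)\sum_{k=0}^{l}r^{kp-m}\sum_{1\le j_{1},\dots,j_{k}\le d}\int_{B(x_{i},r)}|D_{j_{1}}\cdots D_{j_{k}}f|^{p}\,dx .
\]

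\textbf{The sampling estimate: assembly.}
Summing the last inequality over $i$ and using that the cover of $M$ by the balls $B(x_{i},4r)\supset B(x_{i},r)$ has multiplicity $\le N(G)$, so that $\sum_{i}\int_{B(x_{i},r)}|h|^{p}\le N(G)\|h\|_{p}^{p}$, yields
\[
\sum_{i}|f(x_{i})|^{p}\leq C(M,l)N(G)\sum_{k=0}^{l}r^{kp-m}\sum_{1\le j_{1},\dots,j_{k}\le d}\|D_{j_{1}}\cdots D_{j_{k}}f\|_{p}^{p}.
\]
For $f\in\mathbf{B}^{p}_{\omega}(\mathbb{D})$ each of the $d^{k}$ summands of order $k$ is at most $\omega^{kp}\|f\|_{p}^{p}$ by (\ref{Bern}), so the $k$-th term is $\le d^{k}(r\omega)^{kp}r^{-m}\|f\|_{p}^{p}$; since $\sum_{k=0}^{l}d^{k}(r\omega)^{kp}\le C(d,l)\big(1+(r\omega)^{l}\big)^{p}$, taking $p$-th roots gives $\big(\sum_{i}|f(x_{i})|^{p}\big)^{1/p}\le C(M,l)r^{-m/p}\big(1+(r\omega)^{l}\big)\|f\|_{p}$, and multiplying by $r^{m/q}$ produces the right-hand bound of (\ref{NI}); the case $q=\infty$ is the special case $r^{m/q}=1$.

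\textbf{Main obstacle.}
I expect the delicate point to be the rescaled local embedding: one must pass from the Riemannian $W^{l}_{p}$-norm on a ball of radius $r$ to the mixed expressions $D_{j_{1}}\cdots D_{j_{k}}f$ with a constant uniform in the radius (the global norm equivalences recorded just before the theorem are not stated in this rescaled local form), and one must verify that the $g_{i}$-translation preserves this uniformity --- exactly the place where compactness of $G$, through the boundedness of $\mathrm{Ad}$, is used. The remaining ingredients, namely the multiplicity bookkeeping, the application of (\ref{Bern}), and the passage from $q=p$ to $q\ge p$, are routine.
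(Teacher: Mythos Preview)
Your argument is correct in outline and follows the same broad strategy as the paper---a rescaled local Sobolev inequality plus the Bernstein bound for the right-hand side, and a group-averaging argument together with the $\ell^{p}\hookrightarrow\ell^{q}$ inequality for the left-hand side. Two technical choices differ from the paper and are worth noting. First, the paper does \emph{not} write the local Sobolev estimate directly in terms of the vector fields $D_{j}$ on the small ball $B(x_{i},r)$; instead it fixes once and for all a cover $\{B(y_{\nu},r_{0})\}$ with partition of unity $\{\psi_{\nu}\}$, applies the standard Euclidean Sobolev inequality at scale $r$ to each $\psi_{\nu}f$, and only afterwards invokes the \emph{global} equivalence of $\|\cdot\|_{W^{k}_{p}(M)}$ with the $D_{i_{1}}\cdots D_{i_{k}}$-norm (equations (3.2)--(3.5)). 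This cleanly decouples the variable scale $r$ from the passage to the $D_{j}$'s and sidesteps exactly the uniformity issue you flag as the main obstacle; your use of $\mathrm{Ad}(g_{i})$ is correct but is not needed in the paper's route. Second, having arrived at a sum over all orders $0\le j\le l$, the paper invokes the interpolation inequality of Lemma~2.4 to collapse the intermediate orders to just $\|f\|_{p}$ and the top-order term, and only then applies the Bernstein inequality. Your device of applying Bernstein at each order and then bounding $\sum_{k\le l}d^{k}(r\omega)^{kp}\le C(d,l)(1+(r\omega)^{l})^{p}$ is a genuine simplification: it reaches the same conclusion without Lemma~2.4. For the left-hand inequality your averaging argument over all of $G$ (sup $\ge$ mean) is a slight variant of the paper's, which integrates only over the neighborhood $Q_{4r}=\{g:g\cdot o\in B(o,4r)\}$ and uses $\int_{Q_{4r}}dg\asymp r^{m}$; both yield the same bound. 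One small expository point: the first displayed inequality in your reduction paragraph, written without the $\sup_{g}$, is false as stated (take $f$ vanishing on the lattice); your subsequent argument correctly proves the version with the supremum, which is what is needed.
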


\begin{proof}

 Our
nearest goal is to prove the right-hand side of the inequality
(\ref{NI}). We fix a sufficiently small  ball $B(o, r),
0<r<r_{0},$ in the tangent space $T_{o}M$  at the  origin $o\in M$
and an $(r, N(M))$-lattice $\{g_{i}\}=Z_{G}(r, N(M))\subset G,$
such that
 translations $g_{i}\cdot B(o,
r)=B(x_{i}, r), x_{i}=g_{i}\cdot o,$ of the ball $B(o, r)$ are
disjoint.
 We are going to use the following form of the
Sobolev inequality (see \cite{Ad}, Lemma 5.15, or \cite{Nar},
Corollary 3.5.12)
\begin{equation}
|\phi(x)|\leq C_{1}(m,l)\sum_{0\leq j\leq
l}r^{j-m/p}\|\phi\|_{W_{p}^{j}(B(x_{i}, r))}, l>m/p,\label{Sob}
\end{equation}
  where
$x\in B(x_{i}, r/2), \phi \in C^{\infty}\left(B(x_{i}, r)\right)$.

 We apply the
inequality (\ref{Sob}) to a function $\psi_{\nu}f$ from the
formula  (\ref{Sobnorm}) which gives the Sobolev norm. Since
$\psi_{\nu}$ is a partition of unity and since $N(M)$ is a number
 of balls in the cover $B(y_{\nu}, r_{0})$ which intersect each other we
obtain for $x_{i}\in B(y_{\nu}, r_{0})$
\begin{equation}
r^{m}|f(x_{i})|^{p}=r^{m}\left|\sum_{\nu}\psi_{\nu}f(x_{i})\right|^{p}\leq
$$
$$
(N(M))^{p}\sum_{\nu}\left(r^{m/p}|\psi_{\nu}f(x_{i})|\right)^{p}\leq
C_{1}(m,l)(N(M))^{p}\sum_{\nu}\sum_{0\leq j\leq
l}r^{jp}\|\psi_{\nu}f\|_{W_{p}^{j}(B(x_{i}, r))}^{p},
\end{equation}
where $ l>m/p,1\leq p\leq\infty.$  Summation over $i$  gives the inequality
$$
\sum_{i}\left(r^{m/p}|f(x_{i})|\right)^{p}\leq C_{1}(m,l)(
N(M))^{p}\sum_{\nu}\sum_{0\leq j\leq
l}r^{jp}\sum_{i}\|\psi_{\nu}f\|_{W_{p}^{j}(B(x_{i}, r))}^{p},
l>m/p.
$$
 Since  the balls $B(x_{i}, r)$ are disjoint and the
support of $\psi_{\nu}$ is a subset of $B(y_{\nu}, r_{0})$, we
obviously have
\begin{equation}
\sum_{i}\|\psi_{\nu}f\|_{W_{p}^{j}(B(x_{i}, r))}^{p}\leq
\|\psi_{\nu}f\|_{W_{p}^{j}(B(y_{\nu}, r_{0}))}^{p}, 0\leq  j\leq
l.
\end{equation}
 Thus we obtain
that for any given $l>m/p $  there exists a constant
 $C_{5}(M,l)>0$, such that for any $(r,N(M))$-lattice $\{g_{i}\}=Z_{G}(r, N(M))\subset G,$
  the following inequality holds true for $1\leq
p\leq\infty$

$$
\left(\sum_{i}\left(r^{m/p}|f(x_{i})|\right)^{p}\right)^{1/p}\leq
C_{5}(M,l)
N(M)\left(\|f\|_{p}^{p}+\sum_{j=1}^{l}r^{jp}\sum_{\nu}\|\psi_{\nu}f\|^{p}_{W_{p}^{j}(B(y_{\nu},
r_{0}))}\right)^{1/p}\leq
$$
$$
C_{5}(M,l)N(M)\left(\|f\|_{p}^{p}+\sum_{j=1}^{l}r^{jp}\|\psi_{\nu}f\|^{p}_{W_{p}^{j}(M)}\right)^{1/p}.
$$

Since the norms (\ref{Sobnorm}) and (\ref{mixednorm2}) are
equivalent we obtain the inequality

\begin{equation}
\left(\sum_{i}\left(r^{m/p}|f(x_{i})|\right)^{p}\right)^{1/p}\leq
$$
$$
C_{6}\left(\|f\|_{p}+\sum_{j=1}^{l} \sum_{0\leq
k_{1},...,k_{j}\leq d}r^{j}\|D_{k_{1}}...D_{k_{j}}f\|_{p}\right),
l>m/p,
\end{equation}
where $ C_{6}=C_{6}(M,\mathbb{D},B,\Psi, l, N(M)).$ Because every
$D_{k}, k=1,...,d,$ is a generator of a one-parameter isometric
group of bounded operators in $L_{p}(M)$, the interpolation
inequality (\ref{int}) can be used and then the last two
inequalities imply the following estimate
$$
\left(\sum_{i}(r^{m/p}|f(x_{i})|)^{p}\right)^{1/p}\leq
$$
$$
C_{7}\left(\|f\|_{p}+r^{l} \sum_{0\leq k_{1},...,k_{l}\leq
d}\|D_{k_{1}}...D_{k_{l}}f\|_{p}\right), l>m/p,
$$
where $C_{7}=C_{7}(M,\mathbb{D},B,\Psi, l, N(M))$.
 For $f\in \mathbf{B}_{\omega}^{p}(\mathbb{D})$ it gives
$$
\left(\sum_{i}\left(r^{m/p}|f(x_{i})|\right)^{p}\right)^{1/p}\leq
C_{8}\left(1+(r\omega)^{l}\right)\|f\|_{p}, l>m/p,
$$
where  $C_{8}=C_{8}(M,\mathbb{D},B,\Psi, l, N(M)).$ Applying this
inequality to a translated function $f(h\cdot x), h\in G,$ and
using invariance of the measure $dx$ we obtain for $f\in
\mathbf{B}_{\omega}^{p}(\mathbb{D})  $

$$
\sup_{h\in G}\left(\sum_{i}\left(r^{m/p}\left|f(h\cdot
x_{i})\right|\right)^{p}\right) ^{1/p}\leq
$$
\begin{equation}
 C_{8}\left(1+(r\omega)^{l}\right)\sup_{h\in
G}\left\|f(h\cdot x)\right\|_{p}=
C_{8}\left(1+(r\omega)^{l}\right)\|f\|_{p}, l>m/p.
\end{equation}
This inequality implies the right-hand side of the inequality
(\ref{NI}).

 To prove the left-hand side of the (\ref{NI}) we introduce the following neighborhood of the
identity in the group $G$
$$
Q_{4r}=\left\{g\in G : g\cdot o\in B(o,4r)\right\}.
$$
According to the following formula which holds true for any
continuous function $f$  on $M$

$$
\int_{M}f(x)dx=\int_{G}f(g\cdot o)dg,
$$
we have the following estimate for the characteristic function
$\chi_{B}$ of the ball $B(o,4r)$
$$
(4r)^{m}\approx\int_{B(o,4r)}dx=
\int_{M}\chi_{B}(x)dx=\int_{G}\chi_{B}(g\cdot
o)dg=\int_{Q_{4r}}dg.
$$
Since every ball in our cover is a translation of the fixed ball
$B(o, 4r)$ and these balls form a cover of $M$ the $G$-invariance
of the measure $dx$ gives

$$\int_{M}|f(x)|^{q}dx\leq
\sum_{g_{i}\in Z_{G}(r, N(G))}\int_{g_{i}\cdot
B(o,4r)}|f(x)|^{q}dx\leq\sum_{g_{i}\in Z_{G}(r,
N(G))}\int_{B(o,4r)}|f(g_{i}\cdot y)|^{q}dy=
$$
$$
\int_{Q_{4r}}\sum_{g_{i}\in Z_{G}(r, N(G))}|f(g_{i}h\cdot
o)|^{q}dh\leq (4r)^{m}\sup_{g\in G}\sum_{g_{i}\in Z_{G}(r,
N(G))}|f(g_{i}g\cdot o)|^{q},
$$
where $f\in L_{q}(M), 1\leq q\leq \infty,$ $m=dim M$. Next, using
the inequality
$$
\left(\sum a_{i}^{q}\right)^{1/q}\leq \left(\sum
a_{i}^{p}\right)^{1/p},
$$
which holds true for any $ a_{i}\geq 0, 1\leq p\leq q\leq \infty,$
we obtain the following inequality

\begin{equation}
\|f\|_{q}\leq 4^{m}r^{m/q} \sup_{g\in G}\left(\sum_{g_{i}\in
Z_{G}(r, N(G))}\left(|f(g_{i}g\cdot
o)|\right)^{q}\right)^{1/q}\leq
$$
$$
4^{m}r^{m/q} \sup_{g\in G}\left(\sum_{g_{i}\in Z_{G}(r,
N(G))}\left(|f(g_{i}g\cdot o)|\right)^{p}\right)^{1/p}=
$$
$$
4^{m}r^{m/q-m/p} \sup_{g\in G}\left(\sum_{g_{i}\in Z_{G}(r,
N(G))}\left(r^{m/p}|f(g_{i}g\cdot o)|\right)^{p}\right)^{1/p} .
\end{equation}

From these
 inequalities  and the observation, that
 for the element $ g=g_{i}^{-1}hg_{i} $ the expression
$$
\sum_{g_{i}\in Z_{G}(r, N(G))}\left(r^{m/p}\left|f(g_{i}g\cdot
o)\right|\right)^{p}
$$
becomes the expression
$$\sum_{g_{i}\in Z_{G}(r, N(G))}\left(r^{m/p}\left|f(hg_{i}\cdot
o)\right|\right)^{p},
$$
we obtain the left-hand side of the inequality (\ref{NI}). The
Theorem 3.1 is proved.

\end{proof}

This Theorem is used to prove the following result.

\begin{thm}
There exists a constant $C(M)$
 such that for any $1\leq p\leq q\leq \infty$
the following  inequality holds true for all $f\in
\mathbf{B}_{\omega}^{p}(\mathbb{D})$
\begin{equation}
\|f\|_{q}\leq
C(M)\omega^{\frac{m}{p}-\frac{m}{q}}\|f\|_{p},\label{Nik} m=dim M.
\end{equation}

\end{thm}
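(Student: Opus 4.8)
The plan is to derive the Nikolskii inequality (\ref{Nik}) directly from the two-sided inequality (\ref{NI}) of Theorem 3.1 by making a careful choice of the lattice radius $r$. The key observation is that Theorem 3.1 holds for \emph{every} sufficiently small $r>0$, so $r$ is a free parameter at our disposal. Since the left-hand inequality of (\ref{NI}) gives
$$
\|f\|_{q}\leq r^{m/q}\sup_{g\in G}\left(\sum_{g_{i}\in Z_{G}(r,N(G))}|f(gg_{i}\cdot o)|^{p}\right)^{1/p},
$$
and the right-hand inequality bounds the same middle quantity by $C(M,l)r^{m/q-m/p}(1+(r\omega)^{l})\|f\|_{p}$, chaining the two yields
$$
\|f\|_{q}\leq C(M,l)r^{m/q-m/p}\left(1+(r\omega)^{l}\right)\|f\|_{p}
$$
for all admissible $r$. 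This already contains (\ref{Nik}); it only remains to optimize in $r$.

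The main step is the choice $r\sim 1/\omega$. First fix any natural number $l>m/p$ (for instance $l=\lfloor m/p\rfloor+1$, and if one wants a single exponent working for all $p$ one may take $l>m$). Provided $\omega$ is large enough that $r=c/\omega$ is smaller than the threshold $r_{0}$ from Lemma~1.1 and Theorem~3.1, we substitute $r=1/\omega$ and get $1+(r\omega)^{l}=1+1=2$, hence
$$
\|f\|_{q}\leq 2\,C(M,l)\,\omega^{m/p-m/q}\|f\|_{p},
$$
which is exactly (\ref{Nik}) with $C(M)=2C(M,l)$ (absorbing also the factor $4^{m}$ that appeared in the proof of the left-hand side of (\ref{NI})). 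The only remaining case is small $\omega$: when $\omega$ is bounded below away from a value making $1/\omega$ admissible, one instead fixes $r=r_{0}/2$ say, and then (\ref{NI}) reads $\|f\|_{q}\leq C(M,l)r_{0}^{m/q-m/p}(1+(r_{0}\omega/2)^{l})\|f\|_{p}$; since $\mathbf{B}_{\omega}^{p}(\mathbb{D})\subset\mathbf{B}_{\omega'}^{p}(\mathbb{D})$ for $\omega\le\omega'$ we may as well replace $\omega$ by a fixed $\omega_{0}$, so the bound becomes a constant multiple of $\|f\|_{p}$, and for $\omega\ge$ some absolute constant this constant is dominated by $C(M)\omega^{m/p-m/q}$ since $m/p-m/q\ge 0$. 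Enlarging $C(M)$ if necessary covers all $\omega>0$ uniformly.

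The part requiring the most care is the bookkeeping of which constants depend on what: the constant $C(M,l)$ in Theorem~3.1 depends on $M$, on $l$, and implicitly on the fixed cover $B$, partition of unity $\Psi$, and the system $\mathbb{D}$, but crucially \emph{not} on $r$, $\omega$, $q$, or $f$ — this independence is precisely what makes the substitution $r=1/\omega$ legitimate. A secondary subtlety is handling $q=\infty$ (where $m/q=0$ and the sums become suprema) and the edge case $p=\infty$ (where $l>m/p=0$ forces only $l\ge 1$); in both cases the inequalities in (\ref{NI}) were already stated to hold, so no separate argument is needed. Finally, since $l$ can be chosen once and for all depending only on $m$ and $p$ — and in fact one fixed $l>m$ works simultaneously for every $p\in[1,\infty]$ — the resulting constant $C(M)$ genuinely depends only on the manifold (together with the auxiliary fixed data), as claimed.
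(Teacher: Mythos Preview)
Your approach is essentially the paper's own: both chain the two sides of Theorem~3.1 to get
\[
\|f\|_{q}\leq C(M,l)\,r^{m/q-m/p}\bigl(1+(r\omega)^{l}\bigr)\|f\|_{p},
\]
fix a single $l$ valid for all $p$ (the paper takes $l=2m$, you suggest any $l>m$), and then eliminate $r$ by coupling it to $\omega$. The only cosmetic difference is that the paper sets $t=r\omega$ and minimizes $\eta_{p,q}(t)=t^{m/q-m/p}(1+t^{l})$ explicitly, finding a minimum value $\leq 2$, whereas you simply evaluate at $t=1$ and get exactly $2$. Your shortcut is just as good and slightly cleaner.

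One genuine gap: your treatment of small $\omega$ does not work. When $1/\omega$ exceeds the admissible lattice radius you fix $r=r_{0}/2$, obtain a bound $C'\|f\|_{p}$ with $C'$ independent of $\omega$, and then assert that this is dominated by $C(M)\omega^{m/p-m/q}$ after ``enlarging $C(M)$ if necessary.'' But for $p<q$ the exponent $m/p-m/q$ is positive, so $\omega^{m/p-m/q}\to 0$ as $\omega\to 0$; no enlargement of $C(M)$ can absorb a fixed positive constant into a quantity tending to zero. Indeed, a nonzero constant function lies in $\mathbf{B}_{\omega}^{p}(\mathbb{D})$ for every $\omega>0$ and already shows that \eqref{Nik} cannot hold with a single $C(M)$ uniformly down to $\omega=0$. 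The paper itself does not address this point --- it simply sets $r=t_{m,p,q}/\omega$ without checking the smallness constraint --- so the statement should be read as holding for $\omega$ bounded away from zero (e.g.\ $\omega\geq\omega_{0}(M)$), which is the regime of interest for Nikolskii inequalities.
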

\begin{proof} The Theorem 3.1 imply that for  any $1\leq p\leq
\infty$, any natural
  $l>m/p$ there exists a constant $C(M,l)>0$  such that
 for any sufficiently small $r>0$, any $\omega>0$ and any
  $q\geq p$ the following inequality holds
 true
 $$
 \|f\|_{q}\leq C(M,l)r^{m/q-m/p}(1+(r\omega)^{l})\|f\|_{p},
 $$
 for all $f\in \mathbf{B}_{\omega}^{p}(\mathbb{D})$. We make the substitution
$t=r\omega$ into this inequality  to obtain
$$
\|f\|_{q}\leq C(M,l)\eta_{p,q}(t)\omega^{m/p-m/q}\|f\|_{p}, l>m/p,
$$
where
$$
\eta_{p,q}(t)=t^{m/q-m/p}(1+t^{l}), t\in(0,\infty), t=r\omega.
$$

Since $l$ can be any number greater than $m/p$ and $p\geq 1,$ we
fix the number $l=2m$. At the point
\begin{equation}
t_{m,p,q}=\frac{\alpha}{2m-\alpha}\in (0,1),\label{t}
\end{equation}
where
$0< \alpha=m/p-m/q< 1,$ the function $\eta_{p,q}$ has its minimum,
which is
$$
\eta_{p,q}(t_{m,p,q})=\frac{1}{(1-\beta)^{1-\beta}\beta^{\beta}}\leq
2,
$$
where $\beta=\alpha/2m$. For a given $m\in \mathbb{N}, \omega>0,
1\leq p\leq q\leq \infty,$ we can find corresponding $t_{m,p,q}$
using the formula (\ref{t}) and then can find the  corresponding
$r>0$ as $r=r_{m,p,q, \omega}=t_{m,p,q}/\omega$. For such $r$  one
can find a cover of the same multiplicity $N(M)$.  For this cover
we will have the inequality (\ref{Nik}). The Theorem is proved.

\end{proof}

\begin{thm}
For any $1\leq  p\leq q\leq \infty$ the following  equality holds
true
$$
\mathbf{B}_{\omega}^{p}(\mathbb{D})
=\mathbf{B}_{\omega}^{q}(\mathbb{D})\equiv
\mathbf{B}_{\omega}(\mathbb{D}).
$$
\end{thm}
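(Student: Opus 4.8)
The plan is to prove the inclusions $\mathbf{B}_{\omega}^{p}(\mathbb{D})\subseteq\mathbf{B}_{\omega}^{q}(\mathbb{D})$ and $\mathbf{B}_{\omega}^{q}(\mathbb{D})\subseteq\mathbf{B}_{\omega}^{p}(\mathbb{D})$ separately (the case $p=q$ being trivial), using two elementary consequences of the compactness of $M$: the chain of inclusions $L_{q}(M)\subseteq L_{p}(M)$ for $q\geq p$, and, passing to conjugate exponents, the reverse inclusion of the dual spaces $L_{p}(M)^{*}\subseteq L_{q}(M)^{*}$; together with the fact that each one-parameter group $e^{tD_{i}}$ acts isometrically on \emph{every} $L_{r}(M)$, so that a scalar function $t\mapsto\langle\psi^{*},e^{tD_{j}}D_{i_{1}}\cdots D_{i_{k}}h\rangle$ is automatically bounded on $\mathbb{R}$ as soon as $D_{i_{1}}\cdots D_{i_{k}}h\in L_{r}(M)$. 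The main tool is the holomorphic characterization of Bernstein membership, namely the equivalence of conditions 1) and 2) in Theorem~1.6, applied with varying exponents.

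The inclusion $\mathbf{B}_{\omega}^{q}(\mathbb{D})\subseteq\mathbf{B}_{\omega}^{p}(\mathbb{D})$ is almost formal. Let $f\in\mathbf{B}_{\omega}^{q}(\mathbb{D})$; then $f$ and all its iterated $\mathbb{D}$-derivatives $D_{i_{1}}\cdots D_{i_{k}}f$ belong to $L_{q}(M)\subseteq L_{p}(M)$. By the implication $1)\Rightarrow 2)$ of Theorem~1.6 at the exponent $q$, for every multi-index, every $j$, and every $\psi^{*}\in L_{q}(M)^{*}$ the function $t\mapsto\langle\psi^{*},e^{tD_{j}}D_{i_{1}}\cdots D_{i_{k}}f\rangle$ extends to an entire function of exponential type at most $\omega$ and is bounded on $\mathbb{R}$. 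Since $L_{p}(M)^{*}\subseteq L_{q}(M)^{*}$, the same property holds for every $\psi^{*}\in L_{p}(M)^{*}$, so the implication $2)\Rightarrow 1)$ of Theorem~1.6 at the exponent $p$ gives $f\in\mathbf{B}_{\omega}^{p}(\mathbb{D})$. (More concretely, one may instead invoke Lemma~2.1 applied to the isometry group $e^{tD_{i}}$ on $L_{q}(M)$ to get $D_{i}f=\mathcal{R}_{D_{i}}^{\omega}f$ with convergence in $L_{q}(M)$; the same series converges absolutely in $L_{p}(M)$ to an element of norm $\leq\omega\|f\|_{p}$, which must equal $D_{i}f$ after comparing a.e.\ convergent subsequences of the partial sums, and one iterates using the invariance of $\mathbf{B}_{\omega}^{q}(\mathbb{D})$ under each $D_{\nu}$ from Theorem~2.2.)

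For the reverse inclusion $\mathbf{B}_{\omega}^{p}(\mathbb{D})\subseteq\mathbf{B}_{\omega}^{q}(\mathbb{D})$ the dual-space trick breaks down, because a functional in $L_{q}(M)^{*}$ need not lie in $L_{p}(M)^{*}$; one needs honest control of the $L_{q}$-norms of the $\mathbb{D}$-derivatives of $f$, and this is where the Nikolskii inequality enters. Let $f\in\mathbf{B}_{\omega}^{p}(\mathbb{D})$. By Theorem~2.2 every $D_{j}^{r}D_{i_{1}}\cdots D_{i_{k}}f$ again lies in $\mathbf{B}_{\omega}^{p}(\mathbb{D})$, so Theorem~3.2 yields
$$
\|D_{j}^{r}D_{i_{1}}\cdots D_{i_{k}}f\|_{q}\leq C(M)\,\omega^{m/p-m/q}\,\|D_{j}^{r}D_{i_{1}}\cdots D_{i_{k}}f\|_{p}\leq C(M)\,\omega^{\,m/p-m/q+k+r}\,\|f\|_{p}
$$
for all $r\in\mathbb{N}$. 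In particular $D_{i_{1}}\cdots D_{i_{k}}f\in L_{q}(M)$, and the estimate shows that the Taylor series $\sum_{r\geq0}(z^{r}/r!)\,D_{j}^{r}D_{i_{1}}\cdots D_{i_{k}}f$ converges in $L_{q}(M)$ for every $z\in\mathbb{C}$; thus $z\mapsto e^{zD_{j}}D_{i_{1}}\cdots D_{i_{k}}f$ is an $L_{q}(M)$-valued entire function of exponential type at most $\omega$. Pairing it with an arbitrary $\psi^{*}\in L_{q}(M)^{*}$ gives a scalar entire function of exponential type at most $\omega$, bounded on $\mathbb{R}$ because $e^{tD_{j}}$ is isometric on $L_{q}(M)$; hence condition~2) of Theorem~1.6 holds at the exponent $q$, and its implication $2)\Rightarrow 1)$ gives $f\in\mathbf{B}_{\omega}^{q}(\mathbb{D})$. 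The case $q=\infty$ is covered verbatim, since $e^{tD_{i}}$ remains isometric on $L_{\infty}(M)$.

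I expect the only genuine difficulty to be conceptual rather than computational: one must recognize that the passage from $L_{p}$ to $L_{q}$ cannot be done by a crude norm comparison. Hölder's inequality on the compact $M$ only bounds $\|D_{i_{1}}\cdots D_{i_{k}}f\|_{q}$ by a constant times $\|D_{i_{1}}\cdots D_{i_{k}}f\|_{p}\leq\omega^{k}\|f\|_{p}$, and here $\|f\|_{p}$ cannot be replaced by $\|f\|_{q}$, so in $L_{q}$ one obtains no bound reflecting the true order $\omega$. The Nikolskii inequality of Theorem~3.2 is exactly the device that converts $L_{p}$-smallness into $L_{q}$-smallness with the correct $\omega$-dependence, making the higher $\mathbb{D}$-derivatives of $f$ not merely finite in $L_{q}$ but of growth $\omega^{r}$ in $r$. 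Everything else — the dual-space inclusions, the isometry of the translation groups, and the transfer of holomorphy from the vector-valued to the scalar setting — is routine.
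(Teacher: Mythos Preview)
Your proof is correct and follows essentially the same strategy as the paper's: the inclusion $\mathbf{B}_{\omega}^{p}(\mathbb{D})\subset\mathbf{B}_{\omega}^{q}(\mathbb{D})$ is obtained via the Nikolskii inequality (Theorem~3.2) to control $L_{q}$-norms of the iterated derivatives, after which the entire-function characterization yields the $L_{q}$ Bernstein inequality; the reverse inclusion uses compactness of $M$. The only cosmetic differences are that the paper reproves the implication $2)\Rightarrow 1)$ of Theorem~1.6 inline (via the classical scalar Bernstein inequality and a norming functional) rather than citing it, and for the reverse inclusion the paper passes through $q=\infty$ (showing $\mathbf{B}_{\omega}^{\infty}(\mathbb{D})\subset\mathbf{B}_{\omega}^{p}(\mathbb{D})$ and chaining) whereas you argue directly via the dual-space inclusion $L_{p}(M)^{*}\subset L_{q}(M)^{*}$; both are the same compactness argument in slightly different dress.
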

\begin{proof}

First we show that
\begin{equation}
\mathbf{B}_{\omega}^{p}(\mathbb{D}) \subset
\mathbf{B}_{\omega}^{q}(\mathbb{D}), 1\leq  p\leq q\leq \infty
.\label{embed}
\end{equation}

 Since $\mathbf{B}_{\omega}^{p}(\mathbb{D}) $ is
invariant under every operator $D_{i}, 1\leq i\leq d,$ it is
enough to show that if $f\in \mathbf{B}_{\omega}^{p}(\mathbb{D})$,
then for any $1\leq j\leq d, k\in \mathbb{N},$
$$
\|D_{j}^{k}f\|_{q}\leq \omega^{k}\|f\|_{q}, 1\leq  p\leq q\leq
\infty.
$$
Because $f\in \mathbf{B}_{\omega}^{p}(\mathbb{D})$ and this set is
invariant under all operators $D_{i}$, the  Theorem 3.1 gives that
there exists a constant $C_{p,q}$ such that for any $z\in
\mathbb{C}$

$$ \left\|e^{zD_{j}}f\right\|_{q}=
\left\|\sum
^{\infty}_{l=0}\left(z^{l}D_{j}^{l}f\right)/l!\right\|_{q} \leq
C_{p,q}e^{|z|\omega}\|f\|_{p},
$$ It implies that for
any functional $\psi^{*}$ on $ L_{q}(M), 1\leq q\leq \infty,$ the
scalar function
$$
F(z)=\left< e^{zD_{j}}f, \psi^{*}\right>,
$$
is an entire function
 of exponential type $\omega$. At the same time it is bounded on the real axis
 $\mathbb{R}^{1}$
by the constant $\|\psi^{*}\| \|f\|_{q}$. The classical Bernstein
inequality gives
$$ \sup_{t\in \mathbb{R}}\left|\left<
e^{tD_{j}}D_{j}^{k}f, \psi^{*}\right>\right|=\sup_{t\in
\mathbb{R}}\left |\left(\frac{d}{dt}\right)^{k}\left< e^{tD_{j}}f,
\psi^{*}\right>\right |\leq\omega^{k}\|\psi^{*}\| \|f\|_{q},
m\in\mathbb{N}.
$$
When $t=0$ we obtain

$$ \left|\left<D_{j}^{k}f, \psi^{*}\right>\right|\leq \omega^{k} \|\psi^{*}\|
 \|f\|_{q}.$$
Choosing $\psi^{*}$ such that $\|\psi^{*}\|=1$ and
$$
\left< D_{j}^{k}f, \psi^{*}\right>=\|D_{j}^{k}f\|_{q}
$$
we get the inequality
$$
\|D_{j}^{k}f\|_{q}\leq \omega^{k}\|f\|_{q}, k\in \mathbb{N}.
$$

To prove an embedding which is  opposite to (\ref{embed}) we use
the fact that $M$ is compact and because of this the
$L_{\infty}(M)$-norm dominates any $L_{p}(M)$-norm with $1\leq
p<\infty$.  It gives the following inequality for any $f\in
\mathbf{B}_{\omega}^{\infty}(\mathbb{D}),1\leq p\leq \infty$

$$ \left\|e^{zD_{j}}f\right\|_{p}=
\left\|\sum
^{\infty}_{l=0}\left(z^{l}D_{j}^{l}f\right)/l!\right\|_{p} \leq
e^{|z|\omega}\|f\|_{\infty},
$$
which implies that for any functional $\psi^{*}$ on $ L_{p}(M),
1\leq p\leq \infty,$ the scalar function
$$
F(z)=\left< e^{zD_{j}}f, \psi^{*}\right>,
$$
is an entire function
 of exponential type $\omega$ which is bounded on the real axis  $\mathbb{R}^{1}$
by the constant $\|\psi^{*}\| \|f\|_{p}$. At this point we can use
the same arguments which were used  above. The Theorem is proved.

\end{proof}

\section{Relations between $\mathbf{B}_{\omega}(\mathbb{D})$,
$\textbf{E}_{\omega}(\mathcal{L})$ and $\textbf{P}_{n}(M)$}

We keep the same notations as above.

\begin{thm}The following equality takes place
\begin{equation}
\|\mathcal{L}^{k/2}f\|_{2}^{2}=\sum_{1\leq i_{1},...,i_{k}\leq
d}\|D_{i_{1}}...D_{i_{k}}f\|_{2}^{2},\label{eq0}
\end{equation}
which implies the following embeddings
$$
\mathbf{B}_{\sqrt{\omega/d}}(\mathbb{D})\subset
\textbf{E}_{\omega}(\mathcal{L})=
\mathbf{B}_{\omega}^{2}(\mathcal{L})\subset
\mathbf{B}_{\sqrt{\omega}}(\mathbb{D}),
$$
and in particular the following equality
$$
\bigcup_{\omega} \textbf{B}_{\omega}(\mathbb{D})=\bigcup_{j}
\textbf{E}_{\lambda_{j}}(\mathcal{L}).
$$

\end{thm}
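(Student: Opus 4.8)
The plan is to establish the identity \eqref{eq0} first, since everything else follows from it by elementary spectral considerations. To prove \eqref{eq0}, I would work in $L_2(M)$ and use that $\mathcal{L}$ and each $D_i$ are built from skew-adjoint operators: each $D_i = D_{X_i}$ generates a one-parameter group of isometries of $L_2(M)$, hence $D_i^* = -D_i$ on the appropriate domain. Since $-\mathcal{L} = \sum_{i=1}^d D_i^2$, for smooth $f$ we get $\langle \mathcal{L} f, f\rangle = -\sum_i \langle D_i^2 f, f\rangle = \sum_i \langle D_i f, D_i f\rangle = \sum_i \|D_i f\|_2^2$, which is the case $k=2$. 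For general even $k$, the key point is that $\mathcal{L}$ commutes with every $D_i$ (this is the defining property of $\mathcal{L}$ recalled in the Introduction), so $\mathcal{L}^{k/2}$ commutes with each $D_i$ as well; iterating the $k=2$ computation on $g = D_{i_2}\cdots D_{i_k} f$ and summing over all the multi-indices, while using commutativity to move operators past one another, collapses $\sum_{i_1,\dots,i_k} \|D_{i_1}\cdots D_{i_k} f\|_2^2$ down to $\langle \mathcal{L}^k f, f\rangle = \|\mathcal{L}^{k/2} f\|_2^2$. For odd $k$ one reduces to the even case after one application of the $k=2$ step (taking out one pair of derivatives), or one simply notes the statement only needs to be interpreted via $\|\mathcal{L}^{k/2}f\|_2$ through the spectral calculus; I would handle odd $k$ by the same telescoping, pairing $\|\mathcal{L}^{(k-1)/2}\cdot\|$-type quantities.

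Next, from \eqref{eq0} I would read off the two embeddings. If $f \in \textbf{E}_{\omega}(\mathcal{L})$, i.e. $f$ is a finite linear combination of eigenfunctions with eigenvalues $\le \omega$, then $\|\mathcal{L}^{k/2} f\|_2 \le \omega^{k/2}\|f\|_2$ for all $k$, so $f \in \mathbf{B}_\omega^2(\mathcal{L})$; conversely, by the spectral theorem (expanding $f$ in the orthonormal eigenbasis $\{\varphi_j\}$), the Bernstein inequality $\|\mathcal{L}^{k/2}f\|_2 \le \omega^{k/2}\|f\|_2$ for all $k$ forces the spectral measure of $f$ to be supported in $[0,\omega]$, giving $f \in \textbf{E}_\omega(\mathcal{L})$. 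This proves $\textbf{E}_\omega(\mathcal{L}) = \mathbf{B}_\omega^2(\mathcal{L})$. For the inclusion $\mathbf{B}_{\sqrt{\omega/d}}(\mathbb{D}) \subset \textbf{E}_\omega(\mathcal{L})$: if $f$ satisfies the Bernstein inequality $\|D_{i_1}\cdots D_{i_k}f\|_p \le (\omega/d)^{k/2}\|f\|_p$, then by Theorem 3.4 the same holds in $L_2$, and \eqref{eq0} gives $\|\mathcal{L}^{k/2}f\|_2^2 \le d^k (\omega/d)^{k}\|f\|_2^2 = \omega^k \|f\|_2^2$ (there are at most $d^k$ terms in the sum), so $f \in \mathbf{B}_\omega^2(\mathcal{L}) = \textbf{E}_\omega(\mathcal{L})$. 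For $\textbf{E}_\omega(\mathcal{L}) \subset \mathbf{B}_{\sqrt{\omega}}(\mathbb{D})$: if $f \in \textbf{E}_\omega(\mathcal{L})$, then each single term $\|D_{i_1}\cdots D_{i_k}f\|_2^2 \le \sum_{j_1,\dots,j_k}\|D_{j_1}\cdots D_{j_k}f\|_2^2 = \|\mathcal{L}^{k/2}f\|_2^2 \le \omega^k\|f\|_2^2$, giving the Bernstein inequality with $\sqrt\omega$ in $L_2$, hence in all $L_p$ by Theorem 3.4; so $f \in \mathbf{B}_{\sqrt\omega}(\mathbb{D})$.

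Finally, the equality $\bigcup_\omega \mathbf{B}_\omega(\mathbb{D}) = \bigcup_j \textbf{E}_{\lambda_j}(\mathcal{L})$ is immediate from the two embeddings: any $f \in \mathbf{B}_\omega(\mathbb{D})$ lies in $\textbf{E}_{\omega^2 d}(\mathcal{L}) \subseteq \bigcup_j \textbf{E}_{\lambda_j}(\mathcal{L})$ (taking $\omega \mapsto \omega\sqrt d$ in the first embedding, as in the Introduction), and conversely any $f \in \textbf{E}_{\lambda_j}(\mathcal{L})$ lies in $\mathbf{B}_{\sqrt{\lambda_j}}(\mathbb{D}) \subseteq \bigcup_\omega \mathbf{B}_\omega(\mathbb{D})$. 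The main obstacle I anticipate is the careful bookkeeping in the telescoping argument for \eqref{eq0}: one must justify that all operators in sight are applied to smooth (or finite-spectral) vectors so that integration by parts and the commutation relations are legitimate, and that the sum over multi-indices really collapses as claimed rather than producing cross terms — this is where the commutativity of $\mathcal{L}$ with each $D_i$ is doing essential work, and it should be invoked explicitly at each transposition.
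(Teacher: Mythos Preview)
Your proposal is correct and takes essentially the same approach as the paper: skew-adjointness of the $D_i$ together with commutativity of $\mathcal{L}$ (hence of $\mathcal{L}^{1/2}$) with each $D_i$ gives \eqref{eq0} by induction, the spectral expansion in the eigenbasis gives $\textbf{E}_\omega(\mathcal{L})=\mathbf{B}^2_\omega(\mathcal{L})$, and the embeddings follow by bounding one term by the full sum or the $d^k$-term sum by $d^k$ times a single bound. One small slip: your base computation $\langle\mathcal{L}f,f\rangle=\sum_i\|D_if\|_2^2$ is the case $k=1$ of \eqref{eq0}, not $k=2$; the paper handles all $k$ uniformly via $\mathcal{L}^{1/2}$ rather than splitting into even and odd, which is cleaner and what you allude to under ``spectral calculus.''
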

\begin{proof}
Since the spectrum of $\mathcal{L}$ is discrete and of finite
multiplicity, the space $\textbf{E}_{\omega}(\mathcal{L})$ is
finite dimensional and the norm of $\mathcal{L}$ on this space is
exactly $\omega$. It gives the embedding
\begin{equation}
\textbf{E}_{\omega}(\mathcal{L})\subset
\mathbf{B}_{\omega}^{2}(\mathcal{L}).\label{eq1}
\end{equation}

Conversely, let  $0=\lambda_{0}<\lambda_{1}\leq \lambda_{2}\leq
... $ be the set of eigenvalues  of $\mathcal{L}$ listed with
multiplicities and $\varphi_{0}, \varphi_{1}, \varphi_{2}, ...$ be
a corresponding complete system of orthonormal eigenfunctions.
Assume that
$$
\lambda_{m}\leq \omega<\lambda_{m+1}.
$$
If a function $f$ belongs to the space
$\mathbf{B}_{\omega}^{2}(\mathcal{L})$ and the Fourier series
\begin{equation}
f=\sum_{j=0}^{\infty}c_{j}\varphi_{j}
\end{equation}
contains terms with $j\geq \lambda_{m+1}$, then
$$
\lambda_{m+1}^{2k}\sum_{j=m+1}^{\infty}|c_{j}|^{2}\leq
\sum_{j=m+1}^{\infty}|\lambda_{j}^{k}c_{j}|^{2}\leq
\|\mathcal{L}^{k}f\|^{2}\leq \omega^{2k}\|f\|^{2},
$$
which implies
$$
\sum_{j=m+1}^{\infty}|c_{j}|^{2}\leq
\left(\frac{\omega}{\lambda_{m+1}} \right)^{2k}\|f\|^{2}.
$$
In the last inequality the fraction $\omega/\lambda_{m+1}$ is
strictly less than $1$  and $k$ can be any natural number. It
shows that the series (4.3) does not contain terms with $j\geq
m+1$, i.e.  function $f$ belongs to $
\textbf{E}_{\omega}(\mathcal{L})$. We proved the inclusion
$$
\mathbf{B}_{\omega}^{2}(\mathcal{L})\subset\textbf{E}_{\omega}(\mathcal{L}),
$$
which gives along with (\ref{eq1}) the equality
\begin{equation}
\mathbf{B}_{\omega}^{2}(\mathcal{L})=\textbf{E}_{\omega}(\mathcal{L}).\label{eq00}
\end{equation}
The operator
$$
-\mathcal{L}=D_{1}^{2}+...+D_{d}^{2}
$$
commutes with every $D_{j}$ (see the explanation before the
formula (\ref{Laplacian}) in the Introduction).
 The same is
true for $\mathcal{L}^{1/2}$. But then
$$
\|\mathcal{L}^{1/2}f\|_{2}^{2}=<\mathcal{L}^{1/2}f,\mathcal{L}^{1/2}f>=<\mathcal{L}
f,f>=
$$
$$
-\sum_{j=1}^{d}<D_{j}^{2}f,f>=\sum_{j=1}^{d}<D_{j}f,D_{j}f>=
\sum_{j=1}^{d}\|D_{j}f\|_{2}^{2},
$$
$$
\|\mathcal{L}f\|_{2}^{2}=\|\mathcal{L}^{1/2}\mathcal{L}^{1/2}f\|_{2}^{2}=
\sum_{j=1}^{d}\|D_{j}\mathcal{L}^{1/2}f\|_{2}^{2}=
$$
$$
\sum_{j=1}^{d}\|\mathcal{L}^{1/2}D_{j}f\|_{2}^{2}=\sum_{j,k=1}^{d}\|D_{j}D_{k}f\|_{2}^{2}.
$$
From here by induction on $k$ we obtain (4.1). It proves the
formula (\ref{eq0}) which implies the rest of the Theorem. Indeed,
if $f\in \mathbf{B}_{\omega}^{2}(\mathcal{L})$ we obtain that
$f\in
\mathbf{B}_{\omega}^{2}(\mathbb{D})=\mathbf{B}_{\omega}(\mathbb{D})
$ because
$$
\|D_{i_{1}}...D_{i_{k}}f\|_{2}\leq \left(\sum_{1\leq
i_{1},...,i_{k}\leq
d}\|D_{i_{1}}...D_{i_{k}}f\|_{2}^{2}\right)^{1/2}=
\|\mathcal{L}^{k/2}f\|_{2}\leq \omega^{k}\|f\|_{2}.
$$
Thus
$$
\mathbf{B}_{\omega}^{2}(\mathcal{L})\subset\mathbf{B}_{\omega}^{2}(\mathbb{D})
=\mathbf{B}_{\omega}(\mathbb{D}).
$$
 On the other hand, if $f$ belongs to $
\mathbf{B}_{\omega/\sqrt{d}}(\mathbb{D})=\mathbf{B}_{\omega/\sqrt{d}}^{2}(\mathbb{D})$
then
$$
\|\mathcal{L}^{k/2}f\|_{2}=\left(\sum_{1\leq i_{1},...,i_{k}\leq
d}\|D_{i_{1}}...D_{i_{k}}f\|_{2}^{2}\right)^{1/2}\leq
\omega^{k}\|f\|_{2},
$$
which together with (\ref{eq00}) gives the embedding $
\mathbf{B}_{\omega/\sqrt{d}}(\mathbb{D})\subset
\textbf{E}_{\omega}(\mathcal{L})=\mathbf{B}_{\omega}^{2}(\mathcal{L}).$
\end{proof}

\begin{thm} If $M$ is equivariantly embedded into $\mathbb{R}^{N}$
then the following equality holds true
$$
\bigcup _{n}\textbf{P}_{n}(M)=\bigcup_{\omega}
\textbf{B}_{\omega}(\mathbb{D}).
$$

\end{thm}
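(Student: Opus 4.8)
The plan is to prove the two inclusions separately, using the last part of Theorem 4.1 to replace the right-hand side by $\bigcup_{\omega}\mathbf{B}_{\omega}(\mathbb{D})=\bigcup_{j}\mathbf{E}_{\lambda_{j}}(\mathcal{L})$. The inclusion $\bigcup_{n}\mathbf{P}_{n}(M)\subseteq\bigcup_{\omega}\mathbf{B}_{\omega}(\mathbb{D})$ requires nothing new: the proof of Theorem 2.3 actually shows $\mathbf{P}_{n}(M)\subset\mathbf{B}_{n}^{p}(\mathbb{D})$ for every $1\le p\le\infty$, and by Theorem 3.3 this space equals $\mathbf{B}_{n}(\mathbb{D})$, so passing to unions over $n$ gives the claim. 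For the reverse inclusion it suffices, in view of Theorem 4.1 and the fact that each $\mathbf{P}_{n}(M)$ grows with $n$, to show that every eigenspace of $\mathcal{L}$ is contained in some $\mathbf{P}_{n}(M)$, since an arbitrary element of $\bigcup_{j}\mathbf{E}_{\lambda_{j}}(\mathcal{L})$ is a finite linear combination of $\mathcal{L}$-eigenfunctions.

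Write the distinct eigenvalues of $\mathcal{L}$ as $\mu_{0}<\mu_{1}<\cdots$, let $V_{s}$ be the (finite-dimensional) $\mu_{s}$-eigenspace, so that $L_{2}(M)$ is the orthogonal Hilbert sum of the $V_{s}$, and let $P_{s}\colon L_{2}(M)\to V_{s}$ be the orthogonal projection. The argument rests on two observations. First, each $\mathbf{P}_{n}(M)$ is a finite-dimensional subspace of $L_{2}(M)$ which, being invariant under all translations $e^{tD_{j}}$, is invariant under every $D_{j}$, and hence under $\mathcal{L}=-(D_{1}^{2}+\cdots+D_{d}^{2})$; this invariance is already recorded in the proof of Theorem 2.3. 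Since $\mathcal{L}$ is self-adjoint on $L_{2}(M)$, its restriction to the finite-dimensional invariant subspace $\mathbf{P}_{n}(M)$ is diagonalizable, so $\mathbf{P}_{n}(M)=\bigoplus_{s}\bigl(\mathbf{P}_{n}(M)\cap V_{s}\bigr)$ and therefore $P_{s}\bigl(\mathbf{P}_{n}(M)\bigr)=\mathbf{P}_{n}(M)\cap V_{s}$. Second, $\bigcup_{n}\mathbf{P}_{n}(M)$ is the subalgebra of $C(M)$ generated by the constants together with the restrictions to $M$ of the coordinate functions of $\mathbb{R}^{N}$; it separates points of the compact set $M$ and is closed under complex conjugation, so by the Stone--Weierstrass theorem it is dense in $C(M)$, hence in $L_{2}(M)$.

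Now fix $s$. Being bounded and onto $V_{s}$, the projection $P_{s}$ carries the dense subspace $\bigcup_{n}\mathbf{P}_{n}(M)$ of $L_{2}(M)$ onto a dense subspace of $V_{s}$, which by the first observation is the increasing union $\bigcup_{n}\bigl(\mathbf{P}_{n}(M)\cap V_{s}\bigr)$ of linear subspaces of the finite-dimensional space $V_{s}$. A dense linear subspace of a finite-dimensional space coincides with it, and since the union is increasing this forces $V_{s}\subseteq\mathbf{P}_{n}(M)$ for all sufficiently large $n$. Thus every eigenspace of $\mathcal{L}$, and with it all of $\bigcup_{j}\mathbf{E}_{\lambda_{j}}(\mathcal{L})=\bigcup_{\omega}\mathbf{B}_{\omega}(\mathbb{D})$, lies in $\bigcup_{n}\mathbf{P}_{n}(M)$; combined with the first inclusion this yields the asserted equality. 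The only steps that are not purely formal are the $\mathcal{L}$-invariance of $\mathbf{P}_{n}(M)$ — which, as in Theorem 2.3, reduces to the fact that orthogonal substitutions preserve the degree of a polynomial — and the density of polynomial restrictions in $L_{2}(M)$; neither, I expect, poses a genuine obstacle, so the real content of the statement is the bookkeeping assembled from Theorems 2.3, 3.3 and 4.1.
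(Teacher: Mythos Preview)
Your argument is correct, but it follows a genuinely different route from the paper's. Both proofs handle the inclusion $\bigcup_{n}\mathbf{P}_{n}(M)\subset\bigcup_{\omega}\mathbf{B}_{\omega}(\mathbb{D})$ via Theorem~2.3, and both reduce the reverse inclusion to showing that eigenfunctions of $\mathcal{L}$ are polynomials. The divergence is in how that last step is carried out. The paper observes that $\mathcal{L}$ commutes with the $G$-action, so each eigenspace $\mathbf{E}_{\omega}(\mathcal{L})$ is a finite-dimensional $G$-invariant subspace of $L_{2}(M)$; hence every $f\in\mathbf{B}_{\omega}(\mathbb{D})\subset\mathbf{E}_{\omega^{2}d}(\mathcal{L})$ is a \emph{$G$-finite vector} for the quasi-regular representation, and a theorem of Helgason identifies such vectors with restrictions of polynomials. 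You instead show that $\mathbf{P}_{n}(M)$ is $\mathcal{L}$-invariant and therefore splits as a direct sum of its intersections with the eigenspaces $V_{s}$, invoke Stone--Weierstrass to get density of $\bigcup_{n}\mathbf{P}_{n}(M)$ in $L_{2}(M)$, and then use the projection $P_{s}$ together with finite-dimensionality of $V_{s}$ to conclude $V_{s}\subset\mathbf{P}_{n}(M)$ for large $n$. Your approach is more self-contained: it avoids the external representation-theoretic input (Helgason's theorem) and replaces it by the elementary Stone--Weierstrass plus linear algebra. The paper's approach, on the other hand, makes explicit the conceptual point that functions in $\mathbf{B}_{\omega}(\mathbb{D})$ are $G$-finite, which is a structural statement of independent interest and connects the result to the general theory of representations on homogeneous spaces.
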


\begin{proof}
 Note that since $\mathcal{L}$ commutes with all
 operators of the form $D_{X}$ where $X$ is 
 a vector field on $M$ defined in (\ref{vf})(see the explanation before the formula (\ref{Laplacian})
in the Introduction) it commutes with the action of $G$ in the
space $L_{2}(M)$. Indeed if $g$ is an element of $G$ then the
action
$$
x\rightarrow g\cdot x, x\in M,
$$
is the same as a translation along integral curve $\exp tX, t\in
\mathbb{R},$ for an appropriate invariant vector field on $M$
(\cite{Z}, Ch. XV, Theorem 8). The corresponding action of $G$ in
the space $L_{2}(M)$ is given by the formula (see the
Introduction)
$$
e^{tD_{X}}f(x)= f(\exp tX\cdot x), t\in \mathbb{R}, x\in M, f\in
C^{\infty}(M).
$$
Thus, we have
$$
\mathcal{L}e^{tD_{X}}f=\mathcal{L}\sum
\frac{(tD_{X})^{k}f}{k!}=\sum
\frac{(tD_{X})^{k}\mathcal{L}f}{k!}=e^{tD_{X}}\mathcal{L}f.
$$
It shows that if $\varphi$ is an eigenfunction with eigenvalue
$\lambda$ then the same is true for $e^{tD_{X}}\varphi$ because
\begin{equation}
\mathcal{L}\left(e^{tD_{X}}\varphi\right)=
e^{tD_{X}}\mathcal{L}\varphi=\lambda
\left(e^{tD_{X}}\varphi\right).\label{inv}
\end{equation}
It implies that all eigen spaces
$\textbf{E}_{\omega}(\mathcal{L})$ are invariant under action of
$G$ in the space $L_{2}(M)$.

We are going to show that if $f\in
\mathbf{B}_{\omega}^{p}(\mathbb{D})$ for a $1\leq p\leq \infty,
\omega>0,$ then $f$ is a polynomial on $M$. Since
 $\mathbf{B}_{\omega}^{p}(\mathbb{D})=\mathbf{B}_{\omega}^{2}(\mathbb{D}),
 1\leq p\leq \infty, \omega>0,$
 we obtain the inequality
 $$
 \|\mathcal{L}^{k}f\|_{2}=\sum_{1\leq i_{1},...,i_{k}\leq d}
 \|D_{i_{1}}^{2}...D_{i_{k}}^{2}f\|_{2}\leq
 (\omega^{2}d)^{k}\|f\|_{2}, m=dim M,
 $$
which shows that $f$ belongs to the space
$\mathbf{B}_{\omega\sqrt{d}}^{2}(\mathcal{L})$. Since by
(\ref{eq00})
$\mathbf{B}_{\omega\sqrt{d}}^{2}(\mathcal{L})=\textbf{E}_{\omega\sqrt{d}}(\mathcal{L})$
we obtain that $f$ belongs to
$\textbf{E}_{\omega\sqrt{d}}(\mathcal{L}).$

 The space
 $\textbf{E}_{\omega\sqrt{d}}(\mathcal{L})$ is finite
 dimensional and according to (\ref{inv}) is invariant under
 the action of $G$ in $L_{2}(M)$.
This fact implies that all translates of every  function $f\in
\mathbf{B}_{\omega}^{p}(\mathbb{D}),1\leq p\leq \infty, \omega>0,$
belong to a finite dimensional space
$\textbf{E}_{\omega\sqrt{d}}(\mathcal{L})$.  In the terminology of
\cite{H1}, \cite{ H2} it means that every   function $f\in
\mathbf{B}_{\omega}^{p}(\mathbb{D}),1\leq p\leq \infty, \omega>0,$
is a $G$-finite vector of the quasi-regular representation of $G$
in $L_{2}(M)$ and by  a result of  S. Helgason \cite{H1}, $\S 3$,
such functions are  restrictions of  polynomials. In other words
we proved the embedding
$$
\bigcup_{\omega>0}\textbf{B}_{\omega}(\mathbb{D})\subset\bigcup_{n\in
\mathbb{N}}\textbf{P}_{n}(M).
$$
Since the Theorem  2.3 implies the opposite embedding we obtain
the desired result. The Theorem is proved.
\end{proof}

\section{Acknowledgment}

I would like to thank the anonymous referee for constructive
suggestions.

 \makeatletter
\renewcommand{\@biblabel}[1]{\hfill#1.}\makeatother

\end{document}